 \newtheorem{thm}{Theorem}[section]
 \newtheorem{lem}[thm]{Lemma}
 \theoremstyle{defn}
 \newtheorem{defn}[thm]{Definition}
 \theoremstyle{remark}
 \newtheorem*{ex}{Example}
 \numberwithin{equation}{section}
\begin{document}
%
%
%
%
%
%
%
%
%
\title[$4n + 2$ Simple permutations, Pasting and Reversing]{Simple permutations with order $4n + 2$ by means of Pasting and Reversing}
\author[P. Acosta-Hum\'anez]{Primitivo B. Acosta-Hum\'anez}

\address{
Universidad del Atl\'antico \& INTELECTUAL.CO\\
Barranquilla\\
Colombia}

\email{primi@intelectual.co}

\thanks{The first author is partially supported by the MICIIN/FEDER grant number MTM2012-31714 Spanish Government, also by ECOS Nord France-Colombia No C12M01
and Colciencias.}
\author[O. Mart\'{i}nez-C.]{\'Oscar E. Mart\'{i}nez-Castiblanco}
\address{Universidad Sergio Arboleda\\
Bogot\'a D.C\\
Colombia}
\email{oscare.martinez@usa.edu.co}
\subjclass{Primary 37E15; Secondary 05A05, 37A99}

\keywords{Block's Orbits, Combinatorial Dynamics, Markov Graphs, Pasting, Periodic Points, Reversing, Sharkovskii's Theorem, Simple Permutations}

\date{May 10, 2015}
\dedicatory{Dedicated to Jes\'us Hernando P\'erez (Pelusa), teacher and friend.}

\begin{abstract}
The problem of genealogy of permutations has been solved partially by Stefan (odd order) and Acosta-Hum\'anez \& Bernhardt (power of two). It is well known that Sharkovskii's theorem shows the relationship between the cardinal of the set of periodic points of a continuous map, but simple permutations will show the behaviour of those periodic points. Recently Abdulla et al studied the structure of minimal  $4n+2$-orbits of the continuous endomorphisms on the real line. This paper studies some combinatorial dynamics structures of permutations of mixed order $4n+2$, describing its genealogy, using Pasting and Reversing.
\end{abstract}

\maketitle
\section*{Introduction}
According to the Encyclop\ae dia Brittanica, genealogy is ``the study of family origins and history".  In this paper, which is an slightly improvement of \cite{acma,acma1},  we translate this idea to the context of combinatorial dynamics. In particular, we study how the concept of genealogy can be used as a way to understand forcing relations between periodic points. Furthermore, we translate the problems related to periodic points of functions to the framework of group theory too. That is, we study algebraically the dynamics through $n$-cycles and permutations, using Pasting and Reversing of permutations and $n$-cycles, according to \cite{Genealogia,acma,acma1}.

Combinatorial Dynamics as a field appears in 1964 with the paper ``Co-Existence of Cycles of a Continuous Mapping of a Line onto Itself"  written by Oleksandr Mikolaiovich Sharkovskii (see \cite{Sharkovskii,Sharkovskii2}). From this point and on, the study of algebraic and topological relations of continuous functions in $\mathbb{R}$ becomes important. In this context, permutations can be used to show minimal orbits, where a periodic orbit is called minimal if it is minimal period of the map in Sharkovski's ordering.

The structure of minimal odd orbits was characterized by Stefan in 1977, see \cite{Stefan}. 
To characterize the structure of general minimal orbits the notions of simple and strongly simple orbit
was introduced and studied by Louis Block, see \cite{Block1,Block2,Block4}. In this definition Block emphasizes in the three different kinds of orbits related to the three different ``tails" in Sharkovskii's order: the left tail which corresponds to odd order simple permutations (also known as Stefan orbits), the right tail which corresponds to power of two order simple permutations (due to Block, see \cite{Block2}) and the middle tail which corresponds to mixed order simple permutations (see also \cite{Minimal,Coppel,Ho}). Moreover, Alseda et al., see \cite{Minimal}, and Block \& Coppel, see \cite{Block3} independently proved that minimal orbits are strongly simple. 

Chris Bernhardt suggested the study of predecessors and successors of a simple permutation in his paper ``Simple permutations with order a power of two" (see \cite{Bernhardt}) in which he describes a procedure to find the predecessor and the successor of any permutation of order a power of two by using transpositions, rising up a tree with this partial ordering. In the case of Stefan orbits, there exists two simple permutations per order and its forcing (genealogy) can be described as two separated lines according to the first permutation in an explicit way, while for other kind of simple permutation is more complicated the obtaining of an explicit genealogy.

Inspired and motivated by P. Mumbr\'u, the first author gave a new perspective to Bernhardt's results, by including the operations Pasting and Reversing in order to obtain a recursive algorithm that produces the genealogy lines in order a power of two (see \cite{Genealogia}). Thus, Pasting and Reversing becomes an important way to study the genealogy problem in the remaining order: Mixed Order. In this way, in 2010, the authors presented a first draft of an algebraic and combinatorial dynamics study of simple permutations with order $4n+2$, see \cite{acma}, followed by the preprint \cite{acma1} in 2011, in where the dynamic is not considered. Curiously, Alsed\'a et. al. \cite{Minimal} suggest a sort of \emph{reversing} in the study of some periodic orbits, which it was not considered here.

Recently Abdulla et. al., see \cite{abdulla}, presented a simple and constructive proof of the results obtained by Alsed\'a et. al. and Block \& Coppel, see \cite{Minimal, Coppel}, on the structure of minimal  $4n+2$-orbits of the continuous endomorphisms on the real line. They proved that there are four types of Markov graphs up to inverse graphs. Some of these results coincide with some results obtained previously in \cite{acma,acma1} through pasting and reversing techniques, obtained in an independent way.

In this paper, inspired by  \cite{Genealogia, Bernhardt}, we improve the previous results presented in \cite{acma,acma1}. That is, following the same spirit of \cite{Genealogia, Bernhardt}, we will show a procedure to construct simple permutations by using Pasting and Reversing, Markov graphs and primitive functions. We follow \cite{Bernhardt, Block3, Ho} for simple permutations, for instance strong simple periodic orbits will not be considered here.

\section{Combinatorial Dynamics}

The theoretical background of this section is based on references \cite{Combinatorial,Minimal,Bernhardt,Block1,Block4,Ho, Misiurewicz}. The main topics that we need correspond to Sharkovskii's theorem,  primitive functions and Markov graphs.  From a basic course of algebra  it is well known that
$(S_{n},\circ)$ denotes the group of permutations of order ${n}$, which corresponds to the bijective functions over finite subsets of natural numbers. We denote by $C_n$ the set of $n-$cycles of $S_n$, that is, $\sigma\in C_n$ means that $\sigma$ has length $n$ and for instance $\sigma^n=e$ and $\sigma^{n-k}\neq e$, being $0<k<n$. A partition of the interval $J=[x_1,x_n]$  in ${n-1}$ closed subintervals will be defined as
\begin{equation*}
P_{n}=\{x_{i},x_{i+1} \in J : x_{i} < x_{i+1}, \forall i = 1,\dots,n-1 \}
\end{equation*}
and the closed subintervals associated to this partition are denoted by $$J_k:=[x_k,x_{k+1}],\,\, 1\leq k\leq n-1.$$ In particular, if $\forall k\in\mathbb{Z}^+, x_k=k$, then we say that $P_n$ is the \emph{natural partition} of $J$. Thus, we arrive to the following definition.
  \begin{defn}\label{def1}
   Consider $m,n,i\in\mathbb{Z}^+$ , $J=\{x\in \mathbb{Z}:1\leq x\leq mn\}$. The $i-$th natural partition of $J$ of size $n$ is given by $$P(nm,m,i):=\left\{x\in\mathbb{Z}: (i-1)n\leq x\leq in,\, i\leq m\right\}.$$
    \end{defn}

		\begin{thm}[Sharkovskii's Theorem, \cite{Sharkovskii}]\label{th1}
 Let $f$ be a continuous map from the real line to itself and let $\lhd$ be the ordering of the positive integers
\begin{equation}\label{shark}
3 \lhd 5 \lhd 7  \lhd \dots \lhd 3\cdot2 \lhd 5\cdot2 \lhd \dots \lhd 3\cdot2^{2} \lhd 5\cdot2^{2} \lhd \dots \lhd 2^{3} \lhd 2^{2} \lhd 2 \lhd 1.\end{equation} If $f$ has a periodic point of period $n$ and $n$ satisfies $n\lhd m$, then $f$ has a periodic point of period $m$.
		\end{thm}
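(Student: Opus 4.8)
The plan is to prove Theorem~\ref{th1} along classical lines, through covering relations between intervals and the Markov graph of a periodic orbit, following the circle of ideas in the references of this section (cf. \cite{Combinatorial, Misiurewicz}). First I would record the elementary input. For closed intervals $I,J\subseteq\mathbb{R}$, say that $I$ \emph{$f$-covers} $J$, written $I\to J$, if $J\subseteq f(I)$. The intermediate value theorem gives that $I\to J$ implies $I$ contains a subinterval $K$ with $f(K)=J$ carrying endpoints to endpoints; iterating, one gets the \emph{loop lemma}: a cyclic chain $I_0\to I_1\to\cdots\to I_{k-1}\to I_0$ yields a point $x$ with $f^k(x)=x$ and $f^j(x)\in I_j$ for all $j$, and when the chain is \emph{primitive} --- not a repetition of a strictly shorter cyclic chain on the same intervals --- a standard non-degeneracy check makes the period of $x$ exactly $k$. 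Two consequences are used throughout: $I\to I$ forces a fixed point in $I$, and a short extra argument shows that any $f$ with a periodic orbit of period $>1$ has a fixed point and a periodic orbit of period $2$.

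Next I would attach the graph to an orbit. Given a periodic point of period $n$ with orbit $x_1<\cdots<x_n$, set $I_j=[x_j,x_{j+1}]$ for $1\le j\le n-1$ (the \emph{basic intervals}) and form the Markov graph $\Gamma$, putting an edge $I_j\to I_k$ whenever $I_k$ lies between $f(x_j)$ and $f(x_{j+1})$; since $f$ permutes the orbit, each such edge is a genuine $f$-covering, and it is convenient to normalize $f$ by its associated primitive (``connect-the-dots'') function, which has the same graph $\Gamma$. The heart of the proof is the structure of $\Gamma$ when $n$ is odd: a combinatorial analysis --- Stefan's theorem --- shows that for every such orbit $\Gamma$ contains a distinguished interval $A$ with a self-loop $A\to A$, a primitive loop of length $n-1$ through $A$, and primitive loops of every even length $\le n-1$. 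Splicing copies of the self-loop into the length-$(n-1)$ loop then produces primitive loops of every length $\ge n$, so by the loop lemma $f$ has periodic points of period $1$, of every even period, and of every period $\ge n$ --- exactly the set of $m$ with $n\lhd m$.

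Finally I would reduce the remaining cases to the odd one. A periodic point of $f$ of period $p$ is a periodic point of $g=f^2$ of period $p$ if $p$ is odd and of period $p/2$ if $p$ is even, while a periodic point of $g$ of period $r$ has $f$-period $r$ or $2r$; applying the odd case together with the fixed-point and period-$2$ facts to the iterates $f,f^2,f^4,\dots$ and bookkeeping with these correspondences (the standard halving lemmas) propagates the conclusion across the rows of \eqref{shark} and down the power-of-two tail, completing the proof. I expect the real obstacle to be the structural step for odd orbits --- showing that the Markov graph of \emph{every} period-$n$ orbit with $n$ odd contains the self-loop $A\to A$, the primitive loop of length $n-1$ through $A$, and primitive loops of all smaller even lengths; this is the substance of Stefan's ``left-tail'' analysis (extended to arbitrary orbits by Block), while everything else is routine manipulation of the loop lemma.
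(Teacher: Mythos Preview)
The paper does not prove Theorem~\ref{th1}; it is quoted as background with a citation to \cite{Sharkovskii} (and implicitly to \cite{Sharkovskii2,Combinatorial,Misiurewicz}), so there is no in-paper argument to compare against. Your outline is the standard modern proof via covering relations, the loop lemma, Stefan's structural analysis of the Markov graph of an odd-period orbit, and the $f\mapsto f^2$ halving reduction; this is correct and is precisely the approach developed in the references the paper cites (\cite{Combinatorial,Block1,Stefan}).

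One minor wording issue: in the odd case you speak of ``a primitive loop of length $n-1$ through $A$'' and ``primitive loops of every even length $\le n-1$''. What Stefan's analysis actually yields is a self-loop $A\to A$, a primitive loop of length $n$ (realizing the orbit itself), and primitive loops of every even length from $2$ up to $n-1$; splicing the self-loop into the length-$n$ loop then gives all lengths $\ge n$. With that correction your accounting matches exactly the set $\{m:n\lhd m\}$ for odd $n$, and the rest of your reduction is routine as you say.
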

The main result of Sharkovskii relates a new order of the natural numbers and the existence of cycles (periodic points). He defined a new order for the set of natural numbers as follows: $n_{1}$ precedes $n_{2}$ ($n_{1} \lhd n_{2}$) if for all continuous map of the line into itself the existence of a cycle of order $n_{1}$ implies the existence of a cycle of order $n_{2}$. We recall that by minimal orbit we mean the periodic orbit which is minimal period of the map in Sharkovski's ordering \eqref{shark}. The following definitions are adapted from \cite{Bernhardt}.
 
 \begin{defn}\label{def2}
A permutation $\theta$ belongs to the set of permutations of $f$ (named $Perm(f)$) if and only if there exists a partition $P_{k}$ such that $f(x_{i})=x_{\theta_k(i)}$, for all $x_i\in P_k$. It means,
\begin{equation*}
Perm(f)=\{\theta_k\in S_k:\exists P_k \wedge f(x_{i})=x_{\theta_k(i)}, \forall x_i\in P_k \}
\end{equation*}
    \end{defn}

In order to set a relation between permutations and Sharkovskii's Theorem, it is necessary to define a relation between permutations of different orders, which is known as \emph{forcing}, see \cite{Combinatorial}.

    \begin{defn}\label{def3}
Let $\theta$ and $\eta$ be permutations and consider the sets $$F_{\theta}:=\{f:\theta\in Perm(f)\},\quad F_{\eta}:=\{f:\eta\in Perm(f)\}.$$ We say that $\theta$ dominates to $\eta$, denoted by $\theta \lhd \eta$, whether $F_{\theta}\subset F_{\eta}$.
    \end{defn}

Without loss of generality, we can consider linear piece-wise differentiable functions defined on $\mathbb{R}$ ($\overline{f}$ is continuous on $\mathbb{R}$), due to the dynamics only requires the study of periodic points. 

    \begin{defn}\label{def4} Consider a partition $P_n$ and a permutation $\theta\in S_{n}$. A function associated to  $\theta$ is a linear piece-wise differentiable function $\overline{f}$ satisfying 
$$\overline{f}(x)= \left \{ \begin{array}{ccc}
             x_{\theta_1} & if & x < x_1 \\
             \\ m_1x+b_1 &  if & x_1 \leq x <x_2 \\
             \vdots&&
             \\ m_{n-2}x+b_{n-2} &  if & x_{n-2} \leq x < x_{n-1} \\
              \\ m_{n-1}x+b_{n-1} &  if & x_{n-1} \leq x < x_{n} \\
             \\ x_{\theta_n} &  if  & x \geq x_n\end{array}\right.$$
where $$m_k={x_{\theta(k+1)}-x_{\theta(k)}\over x_{k+1}-x_{k}},\quad b_k=x_{\theta(k)}-m_kx_k,\quad k=1,\ldots,n-1.$$ We say that $\overline{f}$ is the primitive function of $\theta$ whether $P_n$ is the natural partition $P_n=\{1,\ldots,n\}$.
    \end{defn}

\noindent From now on, we work with primitive functions of permutations belonging to $C_n$.

\begin{defn}\label{def5}
Consider a partition $P_n$ and $\theta \in Perm(f)$. The Markov graph associated to $f$ and $\theta$ is a directed graph with $n-1$ vertices $J_{1},J_{2},\ldots,J_{n-1}$ such that an arrow is drawn from $J_{k}$ to $J_{l}$ if and only if $f(J_k)\supseteq J_l$. The Markov graph associated to $\theta$ and its primitive function will be called the Markov graph of $\theta$.
\end{defn}

Markov graphs of $\theta$ are also known as A-graphs, see \cite{Combinatorial, Bernhardt,Ho} among others. Recently Abdulla et. al., see \cite{abdulla}, introduced Markov graphs with red edges that they called digraphs, which will not be considered in this work, digraphs have the strong condition that an arrow is traced from $J_l$ to $J_k$ whether $J_k$ is the only interval contained in $J_l$. Markov graphs are useful to study the dynamics of a function due to we can see periodic points whether $f^n(J_k)\supset J_k$, i.e.,  departing from $J_k$ there are $n$ arrows to come back to $J_k$. Sharkovskii's Theorem gives information about the existence of periodic points. If we know the order of the period, then Markov graph of $\theta$ allow us to find information about the structure of those periods.

The following examples will illustrate the previous definitions.

    \begin{ex}
Consider the permutation $$\theta=\left(\begin{array}{cccc}
1 & 2 & 3 & 4\\
2 & 3 & 4 & 1\end{array}\right).$$ The primitive function $\overline{f}$ associated to $\theta$ is:

$$f(x)= \left \{ \begin{array}{ccc}
             2 & if & x < 1 \\
             \\ x+1 &  if & 1 \leq x < 3 \\
             \\ 13-3x &  if & 3 \leq x < 4 \\
             \\ 1 &  if  & x \geq 4\end{array}\right.$$

\begin{figure}[h]
\noindent \begin{centering}
\includegraphics[scale=0.5]{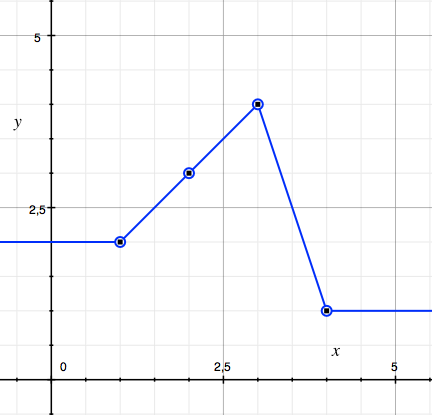}
\par\end{centering}

\caption{Primitive Function, Example 1}

\end{figure}

The intervals used to construct the Markov graph related to $\overline{f}$ and $\theta$ in Example 1 are $J_{1}=[1,2]$, $J_{2}=[2,3]$, $J_{3}=[3,4]$. We can notice that $f(J_{1}) \supseteq J_{2}$, $f(J_{2}) = J_{3}$ and $f(J_{3})=J_{1} \cup J_{2} \cup J_{3}$.

\begin{figure}[h]
\noindent \begin{centering}
\includegraphics[scale=0.5]{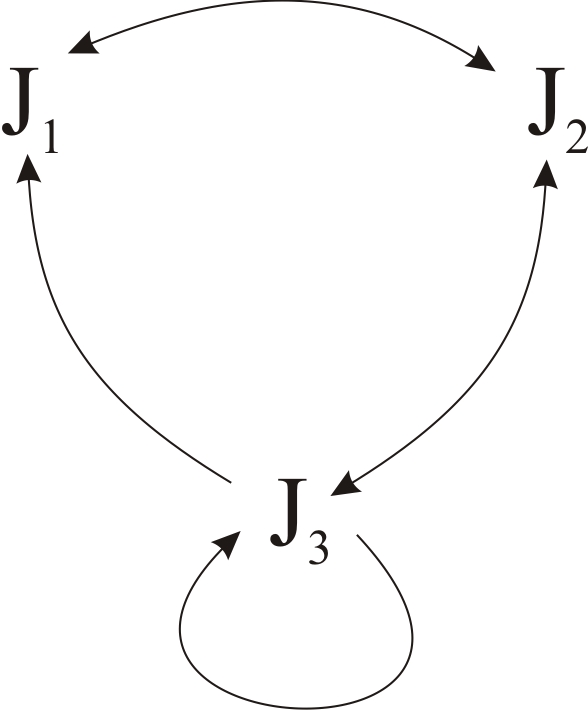}
\par\end{centering}

\caption{Markov Graph, Example 1}

\end{figure}

\noindent According to Sharkovskii's Theorem the existence of a periodic point of order 4 implies the existence of periodic points of order 2 and 1. However, Markov graphs allow us to find a periodic points of order 3.
    \end{ex}

  \begin{ex}
Consider the permutation $$\theta=\left(\begin{array}{cccccc}
1 & 2 & 3 & 4 & 5 & 6\\
6 & 4 & 5 & 1 & 2 & 3\end{array}\right).$$ The primitive function $\overline{f}$ associated to $\theta$ is:

$$f(x)= \left \{ \begin{array}{ccc}
             6 & if & x < 1 \\
	\\ -2x+8 &  if & 1 \leq x < 2 \\
	\\ x+2 &  if & 2 \leq x < 3 \\
	\\ -4x+17 &  if & 3 \leq x < 4 \\
	\\ x-3 &  if & 4 \leq x < 6 \\
	\\ 3 &  if  & x \geq 6\end{array}\right.$$

\begin{figure}[h]
\noindent \begin{centering}
\includegraphics[scale=0.5]{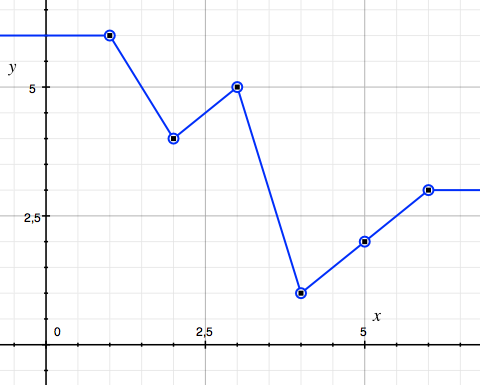}
\par\end{centering}

\caption{Primitive Function, Example 2}

\end{figure}

The intervals used to construct the Markov graph related to $\overline{f}$ and $\theta$ in Example 2 are $J_{1}=[1,2]$, $J_{2}=[2,3]$, $J_{3}=[3,4]$, $J_{4}=[4,5]$, $J_{5}=[5,6]$. We can notice that
$f(J_{2}) = J_{4}$, $f(J_{4}) = J_{1}$, $f(J_{5}) = J_{2}$, $f(J_{1}) \supseteq J_{4}$, $f(J_{1}) \supseteq J_{5}$, $f(J_{3}) \supseteq J_{1}$, $f(J_{3}) \supseteq J_{2}$, $f(J_{3}) \supseteq J_{3}$ and $f(J_{3}) \supseteq J_{4}$.

\begin{figure}[h]
\noindent \begin{centering}
\includegraphics[scale=0.4]{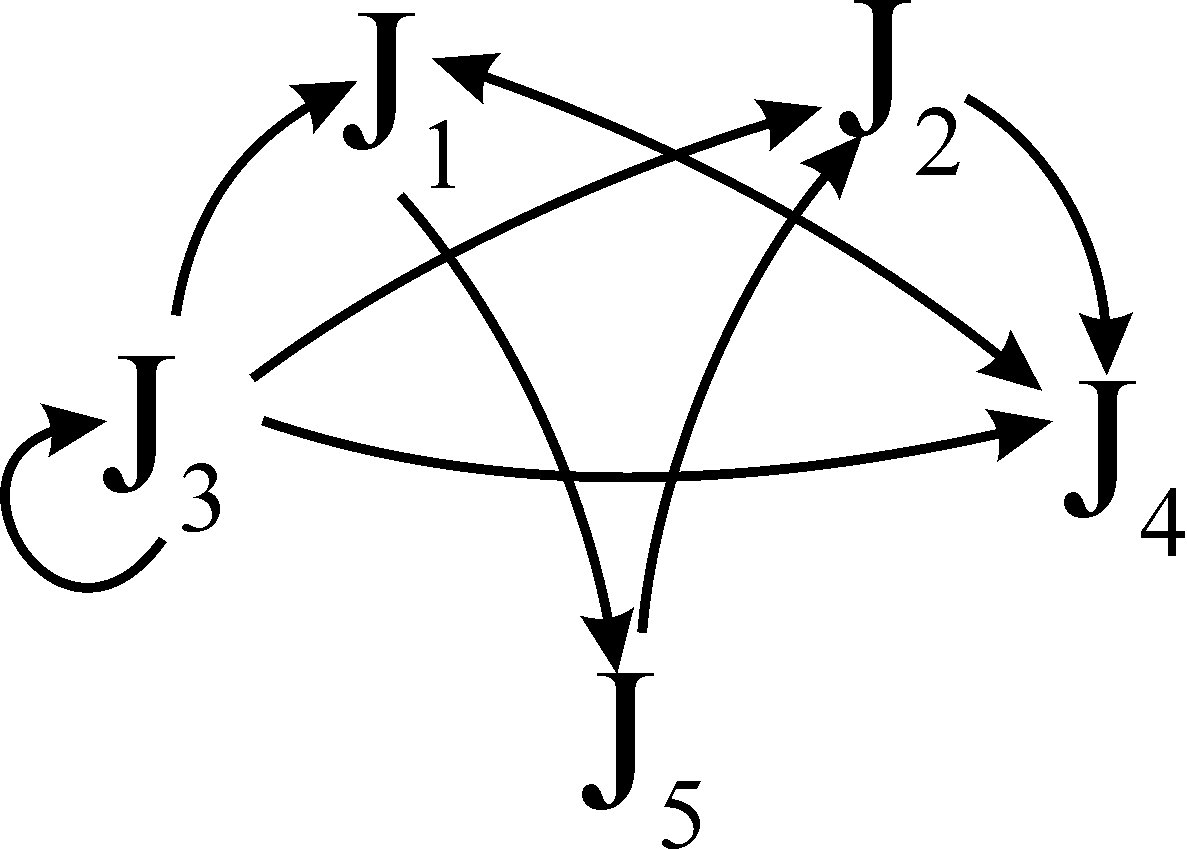}
\par\end{centering}

\caption{Markov Graph, Example 2}
\end{figure}
    \end{ex}
The following results due to Bernhardt, \cite{Bernhardt}, are derived from Definition \ref{def4}.
\begin{lem}\label{lem1}
Consider $\theta\in C_n$, $\eta\in C_m$, $\theta\neq \eta$. Let $\overline{f}$ be the primitive function of $\theta$. If $\eta\in Perm(\overline{f})$, then  the Markov graph of $\theta$ has a non-repetitive loop of length $m$ corresponding to $\eta$.
\end{lem}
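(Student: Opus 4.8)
The plan is to turn a periodic orbit of the primitive function $\overline{f}$ that realizes $\eta$ into a closed walk in the Markov graph of $\theta$, and then to check that this walk is not a proper power of a shorter one (the construction follows Bernhardt). Since $\eta\in Perm(\overline{f})$ with $\eta\in C_m$, there is a partition $P_m=\{y_1<\cdots<y_m\}$ with $\overline{f}(y_i)=y_{\eta(i)}$, and because $\eta$ is an $m$-cycle the set $\{y_1,\dots,y_m\}$ is a single periodic orbit of $\overline{f}$ of least period $m$; relist it in the order it is visited, $z_1,\ z_2=\overline{f}(z_1),\dots,z_m=\overline{f}^{\,m-1}(z_1)$, with $\overline{f}(z_m)=z_1$. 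The first step is to show that every $z_j$ lies in the interior of a natural interval. If some $z_j$ were a breakpoint of $\overline{f}$ (an integer in $\{1,\dots,n\}$) or lay outside $(1,n)$, then $\overline{f}$ maps it into $\{1,\dots,n\}$, and iterating around the orbit puts the whole orbit inside $\{1,\dots,n\}$; since $\theta\in C_n$ has no proper invariant subset, the orbit then equals $\{1,\dots,n\}$, forcing $m=n$ and making the pattern read off the orbit equal to $\theta$ itself, against $\theta\neq\eta$. Hence each $z_j$ lies in the interior of a unique interval $J_{k_j}$, with $k_j\in\{1,\dots,n-1\}$.

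Next I read off the arrows. On $J_{k_j}=[k_j,k_j+1]$ the map $\overline{f}$ is affine and strictly monotone (its slope $\theta(k_j+1)-\theta(k_j)$ is a nonzero integer), so $\overline{f}(J_{k_j})$ is the closed interval between the integers $\theta(k_j)$ and $\theta(k_j+1)$, i.e.\ a union of consecutive natural intervals. Since $z_{j+1}=\overline{f}(z_j)$ lies in the interior of $\overline{f}(J_{k_j})$ and also in the interior of $J_{k_{j+1}}$, and all endpoints in sight are integers, $J_{k_{j+1}}$ must be one of those consecutive intervals, so $\overline{f}(J_{k_j})\supseteq J_{k_{j+1}}$; that is, there is an arrow $J_{k_j}\to J_{k_{j+1}}$ in the Markov graph of $\theta$ (indices modulo $m$, using $\overline{f}(z_m)=z_1$ for the last arrow). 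This yields the loop $J_{k_1}\to J_{k_2}\to\cdots\to J_{k_m}\to J_{k_1}$ of length $m$, the loop associated with $\eta$.

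It remains to see this loop is non-repetitive, which is the one step that genuinely uses $\theta\neq\eta$ again. Suppose the cyclic word $(k_1,\dots,k_m)$ had least period $d$ with $d\mid m$ and $1\le d<m$, and put $t=m/d\ge 2$. By $d$-periodicity of the itinerary, the $d$-step itinerary block starting at $z_1$ agrees with the ones starting at $z_{1+d},z_{1+2d},\dots$, so these $t$ distinct orbit points all lie in a single interval on which $\overline{f}^{\,d}$ is affine, and $\overline{f}^{\,d}$ cyclically permutes them; but an affine self-map of the line possessing a periodic orbit of size $\ge 2$ must have slope $-1$ (and then every nonfixed orbit has size exactly $2$). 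Hence $t=2$, $d=m/2$, and the slopes $\theta(k_i+1)-\theta(k_i)$ along the itinerary, being integers of product $-1$, are all $\pm1$; so each $\overline{f}(J_{k_i})$ is a single natural interval equal to $J_{k_{i+1}}$, the loop is a pure cycle of $m/2$ distinct natural intervals, $\overline{f}^{\,m/2}$ is the reflection of $J_{k_1}$ onto itself, and $\overline{f}^{\,m}$ restricts to the identity on $J_{k_1}$. In particular the integer $k_1$ satisfies $\theta^{m}(k_1)=k_1$, so $n\mid m$, while the interval cycle has length $m/2\le n-1$, forcing $m=n$; and then the $m/2$ cycled intervals are forced to be $J_1,J_3,\dots,J_{n-1}$, with $\theta$ carrying each consecutive pair $\{2i-1,2i\}$ onto a consecutive pair, preserving or reversing order according to the slope sign — which is exactly how the orbit acts on its own points in those intervals — so $\eta=\theta$, a contradiction; the loop is therefore non-repetitive. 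The first two steps are routine bookkeeping with the piecewise-affine structure of $\overline{f}$; I expect the delicate point to be the non-repetitivity, and inside it the disposal of the slope-$(-1)$ degeneration.
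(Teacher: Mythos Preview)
The paper itself does not supply a proof of this lemma; it merely records it as a result ``due to Bernhardt, \cite{Bernhardt}, \dots derived from Definition~\ref{def4}'' and cites it. So there is no in-paper argument to compare with, and your write-up stands as an independent proof of the cited statement.

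Your argument is correct and follows the standard route one would expect from Bernhardt's framework. The first two steps are unproblematic: the range of $\overline{f}$ is $[1,n]$, so every orbit point lies in $[1,n]$, and any integer orbit point forces the whole orbit into $\{1,\dots,n\}$, hence (since $\theta$ is an $n$-cycle) $\eta=\theta$; and once each $z_j$ lies in the open interval $(k_j,k_j+1)$, the fact that $\overline{f}(J_{k_j})$ has integer endpoints immediately gives $J_{k_{j+1}}\subseteq \overline{f}(J_{k_j})$. The non-repetitivity step is where the real content sits, and your treatment is sound: the points $z_1,z_{1+d},\dots$ all share the same $d$-block itinerary, hence lie in a single subinterval of $J_{k_1}$ on which $\overline{f}^{d}$ is affine; an affine real map with a nontrivial periodic orbit has slope $-1$ and only $2$-cycles, so $t=2$, each slope $\theta(k_i+1)-\theta(k_i)=\pm 1$, and the $m/2$ intervals are genuinely cycled by $\overline{f}$. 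From $\theta^{m}(k_1)=k_1$ you get $n\mid m$, and $m/2\le n-1$ forces $m=n$ (and in particular $n$ even, otherwise the repetitive case is already impossible). The $n/2$ pairwise disjoint consecutive pairs covering $\{1,\dots,n\}$ must then be $\{1,2\},\{3,4\},\dots,\{n-1,n\}$, so each $J_{2l-1}$ contains exactly $y_{2l-1},y_{2l}$, and comparing how $\theta$ and $\eta$ act on these pairs according to the slope sign gives $\eta=\theta$.

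In short: there is nothing in the paper to compare against, and your proof is correct; the delicate point is exactly where you flagged it, and your resolution of the slope-$(-1)$ degeneration is complete.
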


\begin{lem}\label{lem2}
Consider $\theta\in C_n$, $\eta\in C_m$, $\theta\neq \eta$. The Markov graph of $\theta$ has a non-repetitive loop of length $m$ corresponding to $\eta$ if and only if $\theta\lhd \eta$.
\end{lem}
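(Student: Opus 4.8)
The plan is to prove the two implications separately. The implication ``$\theta\lhd\eta\ \Rightarrow$ loop'' will be immediate from Lemma \ref{lem1}, whereas the reverse implication ``loop $\Rightarrow\ \theta\lhd\eta$'' will require the standard covering (itinerary) argument for Markov graphs.

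Assume first that $\theta\lhd\eta$, so that $F_\theta\subseteq F_\eta$ by Definition \ref{def3}. Let $\overline{f}$ be the primitive function of $\theta$. By Definition \ref{def4}, on the natural partition one has $\overline{f}(x_i)=x_{\theta(i)}$, hence $\theta\in Perm(\overline{f})$, i.e.\ $\overline{f}\in F_\theta\subseteq F_\eta$, which gives $\eta\in Perm(\overline{f})$. Now the hypotheses of Lemma \ref{lem1} are met (with the same $\theta\in C_n$, $\eta\in C_m$, $\theta\neq\eta$, and the same primitive function), so it yields a non-repetitive loop of length $m$ in the Markov graph of $\theta$ corresponding to $\eta$.

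Conversely, assume the Markov graph of $\theta$ has a non-repetitive loop $J_{k_1}\to J_{k_2}\to\cdots\to J_{k_m}\to J_{k_1}$ corresponding to $\eta$, and fix an arbitrary $f\in F_\theta$, realized by a partition $P_n=\{x_1<\cdots<x_n\}$ with $f(x_i)=x_{\theta(i)}$. The first step is to transfer the loop to $f$: by the Intermediate Value Theorem, $f([x_a,x_{a+1}])\supseteq[\min(x_{\theta(a)},x_{\theta(a+1)}),\max(x_{\theta(a)},x_{\theta(a+1)})]$, and the inclusion of $[x_b,x_{b+1}]$ in the right-hand interval is exactly the purely $\theta$-dependent condition defining the arrow $J_a\to J_b$ in the Markov graph of $\theta$; hence every such arrow, and in particular the whole loop, lives in the Markov graph of $f$ relative to $P_n$. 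The second step applies the itinerary lemma (see \cite{Combinatorial}): the chain of coverings $f(J_{k_i})\supseteq J_{k_{i+1}}$ (indices mod $m$) produces a point $p\in J_{k_1}$ with $f^{i}(p)\in J_{k_{i+1}}$ for $0\le i<m$ and $f^{m}(p)=p$. Since the loop is non-repetitive, $p$ has exact period $m$, and reading off the spatial order of the orbit $\{p,f(p),\dots,f^{m-1}(p)\}$ along the loop reproduces precisely the pattern $\eta$ — this being the meaning of ``the loop corresponds to $\eta$''. Taking this orbit as a partition shows $\eta\in Perm(f)$, i.e.\ $f\in F_\eta$; as $f\in F_\theta$ was arbitrary, $F_\theta\subseteq F_\eta$, hence $\theta\lhd\eta$. (If $\subset$ in Definition \ref{def3} is read strictly, properness follows from $\theta\neq\eta$ and the fact that $\lhd$ is a partial order on cycles; for instance the primitive function of $\eta$ lies in $F_\eta\setminus F_\theta$.)

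The main obstacle is the last part of the converse: turning the combinatorial loop into a genuine period-$m$ orbit carrying \emph{exactly} the order type $\eta$. One must (i) apply the itinerary lemma carefully, including the degenerate situation in which an orbit point of $p$ coincides with some $x_i\in P_n$ (handled by passing to the subpartition generated by the loop, or by a small perturbation of $f$ that preserves $\theta$), and (ii) verify that non-repetitiveness of the loop matches ``$p$ has period exactly $m$'' rather than a proper divisor, and that the bijection induced by the spatial order of the orbit is $\eta$ and not merely some other $m$-cycle. This bookkeeping is what ``corresponding to $\eta$'' encodes, and it is the heart of the proof; the remaining steps are formal.
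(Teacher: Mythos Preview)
The paper does not supply its own proof of Lemma~\ref{lem2}: it is quoted as a result of Bernhardt~\cite{Bernhardt} and left unproved, so there is no in-paper argument to compare against. Your write-up is therefore strictly \emph{more} than what the paper offers.

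As to correctness: the forward implication is fine and is exactly how one would expect to use Lemma~\ref{lem1}. For the converse you follow the standard covering/itinerary argument (as in \cite{Combinatorial,Block1}), and the key observation you make --- that an arrow $J_a\to J_b$ in the Markov graph of the primitive function depends only on the values $\theta(a),\theta(a+1)$ and hence, via the Intermediate Value Theorem, persists for every $f\in F_\theta$ on its own partition --- is the right reduction. The residual issues you flag at the end (an orbit point falling on a partition vertex; ruling out a proper-divisor period; recovering the specific order type $\eta$ rather than just some $m$-cycle) are genuine and are precisely the content hidden in the phrase ``loop corresponding to $\eta$''. In Bernhardt's setting these are dispatched by the usual refinement: choose the periodic point via nested preimages so that its itinerary is the given non-repetitive word, which forces both the exact period and the pattern. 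Your sketch is correct in spirit; if you want a self-contained proof you should spell out that last bookkeeping step rather than leave it as a caveat.
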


Lemmas \ref{lem1} and \ref{lem2} lead to the following result, see \cite{Bernhardt}.

\begin{thm}[Sharkovskii's Extended Theorem, \cite{Bernhardt}]\label{th2}
If $\theta\in C_n$, then for any integer $m$ satisfying $n\lhd m$ there exists $\eta\in C_m$ such that $\theta\lhd \eta$.
\end{thm}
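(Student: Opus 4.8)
The plan is to pass from $\theta$ to its primitive function, invoke the classical Sharkovskii Theorem for that function, and then package the periodic orbit it produces as a cycle in $Perm$, so that Lemmas \ref{lem1} and \ref{lem2} finish the job. Concretely, I would let $\overline f$ be the primitive function of $\theta$ in the sense of Definition \ref{def4}; by construction $\overline f$ is continuous on all of $\mathbb{R}$ (linear on the pieces $[k,k+1]$ and constant on the two outer rays, with matching values at the breakpoints). Since $\theta\in C_n$ is an $n$-cycle, the set $\{1,2,\dots,n\}$ is one orbit of $\overline f$ of period exactly $n$, so $\overline f$ has a periodic point of period $n$.

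Next, fix an integer $m$ with $n\lhd m$. By Theorem \ref{th1}, $\overline f$ has a periodic point of period exactly $m$; its orbit is a set of $m$ distinct reals $y_1<y_2<\dots<y_m$, which I take as a partition $P_m$. The map $\overline f$ sends this orbit to itself, inducing a permutation $\eta\in S_m$, and because the orbit is a single cycle of length $m$ we get $\eta\in C_m$ with $\eta\in Perm(\overline f)$ by Definition \ref{def2}. Since the Sharkovskii order is a strict order, $n\lhd m$ forces $n\neq m$, hence $\eta\neq\theta$, so the hypotheses of Lemma \ref{lem1} hold with $\overline f$ the primitive function of $\theta$.

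Applying Lemma \ref{lem1}, the Markov graph of $\theta$ then contains a non-repetitive loop of length $m$ corresponding to $\eta$, and Lemma \ref{lem2} upgrades this to the forcing relation $\theta\lhd\eta$, which is exactly the assertion. The only step that needs genuine care is the middle one: checking that a period-$m$ periodic point of $\overline f$ really does yield an element of $C_m\cap Perm(\overline f)$ — that is, that the combinatorial type of the orbit is a full $m$-cycle and that its increasing enumeration is an admissible partition in the sense of Definition \ref{def2}. Once that is in place, the theorem is just the concatenation Sharkovskii $\Rightarrow$ Lemma \ref{lem1} $\Rightarrow$ Lemma \ref{lem2}.
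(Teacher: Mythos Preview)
Your argument is correct and matches the paper's own treatment: the paper does not spell out a proof but simply notes that the result follows from Lemmas~\ref{lem1} and~\ref{lem2} (together with Sharkovskii's Theorem applied to the primitive function), which is exactly the chain you have written out. Your care about verifying that the period-$m$ orbit of $\overline f$ really yields an $m$-cycle in $Perm(\overline f)$ is appropriate and the verification is straightforward, since a genuine period-$m$ orbit consists of $m$ distinct points cyclically permuted by $\overline f$.
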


Following \cite{Bernhardt,Block1,Block2,Block4} we give the following definition.
   	\begin{defn}\label{def6} A permutation is considered simple whether it satisfies one of the following conditions according to its order\medskip

	\begin{enumerate}

	\item Odd order. Consider $n\in \mathbb{Z}^+$ The permutation $\theta\in C_{2n+1}$ is simple whether $\theta\in\{\alpha_{2n+1},\beta_{2n+1}\}$, where
\begin{eqnarray*}
\alpha_{2n+1} & = & (1,2n+1,n+1,n,n+2,n-1,n+3,\dots,2,2n)\\
 & = & \left(\begin{array}{ccccccccc}
1 & 2 & \dots & n & n+1 & n+2 & \dots & 2n & 2n+1\\
2n+1 & 2n & \dots & n+2 & n & n -1& \dots & 1 & n+1\end{array}\right)\
\end{eqnarray*}
and
\begin{eqnarray*}
\beta_{2n+1} & = & (1,n+1,n+2,n,n+3,n-1,\dots,2,2n+1)\\
 & = & \left(\begin{array}{ccccccccc}
1 & 2 & \dots & n & n+1 & n+2 & \dots & 2n & 2n+1\\
n+1 & 2n+1 & \dots & n+3 & n+2 & n & \dots & 2 & 1\end{array}\right)\
\end{eqnarray*}\medskip

	\item
	\emph{(Order a power of two)}
A permutation $\theta\in C_{2^n}$ is simple whether it satisfies the following two conditions
\begin{enumerate}
\item
$\theta^{2^{j}}[P(2^{n},2^{j},i)]=P(2^{n},2^{j},i)$
\item
$\theta^{2^{j}}[P(2^{n},2^{k+1},j)]\cap P(2^{n},2^{k+1},j)=\emptyset$
\end{enumerate}\medskip

	\item\medskip

	\emph{(Mixed order $(2k+1)2^{s}$)}
A permutation $\theta\in C_{(2k+1)2^{s}}$, where $k>1$ and $s>1$, is simple whether it satisfies the following two conditions:

\begin{enumerate}
\item $\theta\left[P\left((2k+1)2^{s},2^{s},j\right)\right]=P\left((2k+1)2^{s},2^{s},\sigma\left(j\right)\right)$, where
$\sigma\in C_{2^{s}}$ is simple;\medskip

\item $\theta^{2^{s}}$ restricted to each $P\left((2k+1)2^{s},2^{s},j\right)$  is simple for all $j$, that is, the set $\theta^{2^{s}}(P\left((2k+1)2^{s},2^{s},j\right))=P\left((2k+1)2^{s},2^{s},j\right)$ induces a simple permutation for all $j$.
\end{enumerate}

	\end{enumerate} The set of simple permutations belonging to $C_k$ is denoted by $Sim(k)$.
	\end{defn}

The following examples show some permutations corresponding to the previous definition.

\begin{ex} The permutations
$$\alpha_{5}=\left(\begin{array}{ccccc}
1 & 2 & 3 & 4 & 5\\
5 & 4 & 2 & 1 & 3\end{array}\right),\quad
\beta_{5}=\left(\begin{array}{ccccc}
1 & 2 & 3 & 4 & 5\\
3 & 5 & 4 & 2 & 1\end{array}\right)$$ are the simple permutation of order $5$ (Stefan orbits of order $5$), that is,  $Sim(5)=\{\alpha_5,\beta_5\}$.
\end{ex}

\begin{ex}The permutation
$$\theta=\left(\begin{array}{cccc}
1 & 2 & 3 & 4\\
3 & 4 & 2 & 1 \end{array}\right)$$ is a simple permutation of order $4$ because $\theta\{1,2\}=\{3,4\}$, $\theta\{3,4\}=\{1,2\}$, $\theta\{1,2,3,4\}=\{1,2,3,4\}$, $\theta^{2}\{1,2\}=\{1,2\}$, $\theta^{2}\{3,4\}=\{3,4\}$, $\theta^{2}\{1\}=\{2\}$,  $\theta^{2}\{2\}=\{1\}$,  $\theta^{2}\{3\}=\{4\}$,  $\theta^{2}\{4\}=\{3\}.$ That is, $\theta\in Sim(4)$.
\end{ex}

\begin{ex}
Consider the permutation $$\theta=\left(\begin{array}{cccccc}
1 & 2 & 3 & 4 & 5 & 6\\
6 & 5 & 4 & 1 & 3 & 2\end{array}\right).$$

Taking $$\alpha_3=\left(\begin{array}{ccc}
1 & 2 & 3\\
3 & 1 & 2\end{array}\right),\quad \beta_3=\left(\begin{array}{ccc}
1 & 2 & 3\\
2 & 3 & 1\end{array}\right), \quad \sigma=\left(\begin{array}{cc}
1 & 2\\
2 & 1\end{array}\right),$$ we see that
 \begin{eqnarray*}
\theta\left[P\left(6,2,1\right)\right] & = & \theta\left[\left\{ 1,2,3\right\} \right]=\left\{ 6,5,4\right\} =\left\{ 4,5,6\right\} =P\left(6,2,\sigma\left(1\right)\right)=P\left(6,2,2\right),\end{eqnarray*}
\begin{eqnarray*}
\theta\left[P\left(6,2,2\right)\right] & = & \theta\left[\left\{ 4,5,6\right\} \right]=\left\{ 1,3,2\right\} =\left\{ 1,2,3\right\} =P\left(6,2,\sigma\left(2\right)\right)=P\left(6,2,1\right).\end{eqnarray*} Due to 
$$\theta^{2}=\left(\begin{array}{cccccc}
1 & 2 & 3 & 4 & 5 & 6\\
2 & 3 & 1 & 6 & 4 & 5\end{array}\right),$$
we have that  \begin{eqnarray*}
\theta^2\left[P\left(6,2,1\right)\right] & = & \theta^2\left[\left\{ 1,2,3\right\} \right]=\left\{ 2,3,1\right\} =\left\{ 1,2,3\right\} =P\left(6,2,1\right),\end{eqnarray*}
\begin{eqnarray*}
\theta^2\left[P\left(6,2,2\right)\right] & = & \theta^2\left[\left\{ 4,5,6\right\} \right]=\left\{ 6,4,5\right\} =\left\{ 4,5,6\right\} =P\left(6,2,2\right),\end{eqnarray*} which induce the simple permutations $\alpha_3$ and $\beta_3$. That is, $\theta^{2}$ restricted to $P\left(6,2,1\right)$  is $\beta_3$ and $\theta^{2}$ restricted to $P\left(6,2,2\right)$  is $\alpha_3$.
\end{ex}

The concept of $(2k-1)2^s$ simple periodic orbit was refined by  Block \& Coppel in \cite{Block3}, and adapted from \cite{abdulla}, as follows:

\begin{defn}\label{def7} 
A simple $(2k-1)2^s$-orbit of $f$ is said to be strongly simple whether $f$ maps the midpoint of every block of $2k-1$ consecutive points into another such midpoint, with one exception. In particular, this implies that $f$ maps every block monotonically onto another block, with one exception.
\end{defn}

The following result was obtained independently by Alsed\'a et. al. and Block \& Coppel, see \cite{Minimal, Block3}.
\begin{thm}[Alsed\'a et. al. - Block \& Coppel, \cite{Minimal, Block3}]\label{th3}
Minimal $(2k-1)2^s$ -orbit of $f$, where $k > 1$
and $s > 1$, is strongly simple unless $k = 2$, in which case it is simple.
\end{thm}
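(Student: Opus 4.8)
\emph{Proof plan.} The plan is to recover the block decomposition of a minimal orbit by a descent on $s$, to settle the base case by Stefan's classification of minimal odd orbits (cf.\ Definition~\ref{def6}(1)), and then, for $k>2$, to promote the resulting simple structure to a strongly simple one by bounding how many block transitions can fold; for $k=2$ this promotion genuinely fails, since it requires blocks with at least five points, and one stops at simplicity.

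First I would establish the \emph{halving step}. Let $O=\{x_1<\dots<x_n\}$, $n=(2k-1)2^s$, be a minimal orbit of $f$, with associated cycle $\theta\in C_n$ and primitive function $\overline{f}$, and set $L=\{x_1,\dots,x_{n/2}\}$, $R=\{x_{n/2+1},\dots,x_n\}$. As $n$ is even and $\theta$ is an $n$-cycle it cannot fix $L$, and I claim that in fact $\theta(L)=R$ and $\theta(R)=L$: if not, some point of one half is sent back into that half, and tracking the orbit through this re-entry produces in the Markov graph of $\theta$ a non-repetitive loop of some length $m$ with $m\lhd n$ (an odd integer $\ge 3$, or $2^{j}$ times such an odd number, whose value precedes $n$ in \eqref{shark}); by Lemma~\ref{lem2} this gives $\theta\lhd\eta$ for some $\eta\in C_m$, so $f$ has a periodic point of period $m\lhd n$, contradicting minimality. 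Granting $\theta(L)=R$, the set $O\cap L$ is an $(n/2)$-cycle of $\overline{f}^{\,2}$ and is again minimal --- an extra periodic point of $\overline{f}^{\,2}$ of period $m'\lhd n/2$ would give one of $\overline{f}$ of period $m'$ or $2m'$, each preceding $n$ in \eqref{shark} --- now of period $(2k-1)2^{s-1}$. Iterating the halving step $s$ times, and checking along the way that the orbits of the successive squarings $\theta^{2^{j}}$ are intervals of consecutive points of $O$, splits $O$ into $2^s$ blocks of $2k-1$ points on which $\theta$ acts as in conditions (a)--(b) of Definition~\ref{def6}(3): the nested interchanges assemble into a simple permutation $\sigma\in C_{2^s}$, and the restriction of $\overline{f}^{\,2^{s}}$ to each block has that block as a minimal orbit of odd period $2k-1$, hence induces $\alpha_{2k-1}$ or $\beta_{2k-1}$. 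So $O$ is simple; this already settles $k=2$.

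For $k>2$ I would upgrade to strong simplicity. In the simple structure $\theta$ carries block $B_j$ bijectively onto block $B_{\sigma(j)}$, and being strongly simple is exactly the statement that this bijection is monotone --- and hence sends the midpoint (the $k$-th point of $B_j$) to the midpoint of $B_{\sigma(j)}$ --- for every block but one. Since $\sigma$ is a single $2^s$-cycle, the composition of the $2^s$ block transitions around it equals $\theta^{2^{s}}$ restricted to $B_j$, which we have just identified as $\alpha_{2k-1}$ (or $\beta_{2k-1}$) and which therefore has exactly one fold; as a composition of monotone bijections is monotone, at least one transition is non-monotone. That \emph{exactly} one transition is non-monotone I would again force through minimality: a second folding transition contributes an extra turning interval to the Markov graph of $\theta$, and --- using that each block now has $2k-1\ge 5$ points, hence room enough --- this yields a non-repetitive loop of admissible length $m\lhd n$, contradicting minimality. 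Once the single exceptional block is isolated, $\overline{f}$ maps every other $B_j$ affinely and monotonically onto the interval spanned by $B_{\sigma(j)}$, carrying its midpoint to that of $B_{\sigma(j)}$, which is exactly Definition~\ref{def7}.

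The main obstacle is the Markov-graph bookkeeping that recurs twice --- forcing the interchange $\theta(L)=R$ in the halving step, and forcing uniqueness of the folding transition in the upgrade --- i.e.\ the proof that \emph{every} deviation actually manifests as a non-repetitive loop whose length precedes $n$ in \eqref{shark}. This requires a careful analysis of how the $n$ points of $O$ interleave, and it is precisely here that the threshold $2k-1\ge 5$ enters: for three-point blocks a folding transition is too short to create a forbidden loop, so minimality does not exclude several of them, the midpoint-to-midpoint property can fail at more than one block, and one is left with only simplicity --- the stated exception at $k=2$.
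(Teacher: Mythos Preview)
The paper does not prove Theorem~\ref{th3}; it is stated without proof and attributed to Alsed\'a--Llibre--Serra \cite{Minimal} and Block--Coppel \cite{Block3}, so there is no proof in the paper to compare your proposal against.

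For what it is worth, your outline follows the classical strategy of those references --- the halving descent that splits the orbit into two interchanged halves, iterated $s$ times to produce the $2^s$ blocks, Stefan's classification of minimal odd orbits at the base, and then a count of non-monotone block transitions to upgrade from simple to strongly simple when $2k-1\ge 5$. Your own diagnosis of the obstacle is accurate: the substantive work lies entirely in the Markov-graph bookkeeping (showing that a failure of $\theta(L)=R$, or a second folding transition, \emph{always} yields a non-repetitive loop of length preceding $n$ in \eqref{shark}), and your sketch asserts rather than carries out this step. In particular, the phrase ``tracking the orbit through this re-entry produces \ldots\ a non-repetitive loop'' hides a case analysis that is not short, and the claim that a second fold ``contributes an extra turning interval'' and hence a forbidden loop needs the explicit combinatorics of how the Stefan structure on each block interacts with the block permutation $\sigma$. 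So as a plan it is the right one, but it is not yet a proof.
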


\section{Pasting and Reversing}

Pasting and Reversing operations have been defined and applied on integer numbers (\cite{Pegamiento}), rings over commutative fields (\cite{preprint}), vector spaces and matrices (\cite{Vector,preprint}), cycles and permutations (\cite{Genealogia,acma,acma1}).  A kind of reversing was mentioned by Alsed\'a et. al (\cite{Minimal}) to study minimal orbits. In this paper, Pasting and Reversing operations will be used to describe the genealogy of simple permutations with mixed order $4n+2$  and a procedure to generate them  such as were done in the case of simple permutations with order a power of two, see \cite{Genealogia}.

	\begin{defn}[Pasting of Disjoint Cycles of a Permutation]\label{def8}
Consider $\theta =uv\in S_{n}$, being $u$ and $v$ disjoint cycles such that $$\theta =uv=(i_{1}, \dots, i_{k})(i_{k+1}, \dots, i_{n}),$$ where $i_1=1$ and for  $j>k+1$, it satisfies that $ i_{k+1}<i_j$. The Pasting of $u$ with $v$ is the $n$-cycle defined as follows:
\begin{equation*}
u\diamond v=(i_{1},\dots,i_{k},i_{k+1},\dots,i_{n})
\end{equation*}
\end{defn}

\begin{defn}[Reversing of Cycles]\label{def9}
Consider a $q$-cycle of a permutation $\theta\in S_n$ given by $u=(i_{1},\dots,i_{q})$, where $i_1<i_j$ for all $j>1$ and $q\leq n$. The Reversing of $u$, denoted by $\widetilde{u}$, is 
\begin{equation*}
\widetilde{u}:=(i_1,i_{k},i_{k-1},\dots,i_{2}).
\end{equation*}
	\end{defn}
Recall that we write any $q$-cycle in the form $(i_1,i_2,\ldots,i_q)$ where $i_1<i_j$ for all $j>1$. Furthermore, pasting and reversing of permutations written as cycles will not be considered here. Pasting and Reversing of disjoint cycles of a permutation are considered in Definition \ref{def8} and Definition \ref{def9}, whilst Pasting and Reversing of permutations will be given in Definition \ref{def10} and Definition \ref{def11}. 
	\begin{defn}[Pastings of Permutations]\label{def10}
Consider $\alpha\in S_{m}$, $\beta\in S_{n}$. The left and right pastings of $\alpha$ with $\beta$ are permutations in $S_{n+m}$ defined as follows: \medskip

\begin{itemize}
\item
Left Pasting of $\alpha$ with $\beta$, denoted by $\alpha\mid\diamond\beta$, is
\begin{eqnarray*}
\alpha\mid\diamond\beta & := & \left(\begin{array}{cccccc}
1 & \dots & m & m+1 & \dots & m+n\\
\alpha (1)+n & \dots & \alpha (m)+n & \beta (1) & \dots & \beta (n)\end{array}\right)\
\end{eqnarray*}\medskip

\item
Right Pasting of $\alpha$ with $\beta$, denoted by $\alpha\diamond\mid\beta$, is
\begin{eqnarray*}
\alpha\diamond\mid\beta & =: & \left(\begin{array}{cccccc}
1 & \dots & m & m+1 & \dots & m+n\\
\alpha (1) & \dots & \alpha (m) & \beta (1)+m & \dots & \beta (n)+m\end{array}\right).\
\end{eqnarray*}
\end{itemize}
\end{defn}

	\begin{defn}[Reversing of Permutations]\label{def11}
Consider $\alpha\in S_{m}$. Reversing of $\alpha$, denoted by $\widetilde{\alpha}$,  is the permutation of $S_{m}$ defined as
\begin{eqnarray*}
\widetilde{\alpha} & = & \left(\begin{array}{ccccc}
1 & 2 & \dots & m-1 & m\\
\alpha (m) & \alpha(m)-1 & \dots & \alpha (2) & \alpha (1)\end{array}\right).\
\end{eqnarray*}
	\end{defn}

The following results, which appear without proof in \cite{Genealogia}, states the properties as algebraic structure of Pasting and Reversing for permutations and disjoint cycles of permutations. Here we prove these theorems.

	\begin{thm}[Idempotency Laws of Reversing]\label{th4}
Assume $\alpha\in S_{m}$, $\theta \in S_n$ such that $u$ is a $q$-cycle of $\theta$ given by $u=(i_{1},\dots,i_{q})$, then the following hold
\begin{enumerate}
\item
$\widetilde{\widetilde{\alpha}}=\alpha$\\
\item
$\widetilde{\widetilde{u}}=u$
\end{enumerate}
	\end{thm}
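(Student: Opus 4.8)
The plan is to prove the two statements independently, each by unwinding the relevant definition and checking directly that reversing is an operation of order two.

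For the cycle identity (2), I would take $u$ in the normalized form required by Definition~\ref{def9}, say $u=(i_1,i_2,\dots,i_q)$ with $i_1<i_j$ for every $j>1$. One application of the definition gives $\widetilde u=(i_1,i_q,i_{q-1},\dots,i_2)$. The point to record is that $i_1$ is still the smallest symbol occurring, so $\widetilde u$ is again written in normalized form and Definition~\ref{def9} may be applied to it; doing so fixes the leading symbol $i_1$ and reverses the block $(i_q,i_{q-1},\dots,i_2)$, producing $(i_1,i_2,\dots,i_q)=u$. Put more conceptually: reversing a cycle leaves the least entry fixed and inverts the cyclic order of the remaining $q-1$ entries, so applying it twice inverts that order twice and restores $u$.

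For the permutation identity (1), I would read off from Definition~\ref{def11} the pointwise rule $\widetilde\alpha(i)=\alpha(m+1-i)$ for $i=1,\dots,m$, equivalently $\widetilde\alpha=\alpha\circ\rho_m$ where $\rho_m$ is the involution $i\mapsto m+1-i$ of $\{1,\dots,m\}$. Since $\widetilde\alpha\in S_m$ (as guaranteed by that definition), reversing may be applied again, and $\widetilde{\widetilde\alpha}=\widetilde\alpha\circ\rho_m=\alpha\circ\rho_m\circ\rho_m=\alpha$ because $\rho_m^2=\mathrm{id}$; equivalently $\widetilde{\widetilde\alpha}(i)=\widetilde\alpha(m+1-i)=\alpha\bigl(m+1-(m+1-i)\bigr)=\alpha(i)$ for all $i$, so the two permutations agree pointwise on their common finite domain and are therefore equal.

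I do not anticipate a real difficulty: each part is index bookkeeping together with the fact that $i\mapsto m+1-i$ (resp.\ reversal of a linearly ordered block) squares to the identity. The only subtlety worth a sentence is the well-definedness issue in part (2) — one must check that a single reversing returns a cycle still written with its least symbol first, so that Definition~\ref{def9} legitimately applies to the second reversing — and this is immediate since reversing never displaces the leading symbol.
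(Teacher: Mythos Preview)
Your proof is correct and follows essentially the same route as the paper's: both parts are handled by directly unwinding the definitions and applying reversing twice. Your presentation is marginally more conceptual---writing $\widetilde\alpha=\alpha\circ\rho_m$ with $\rho_m$ the involution $i\mapsto m+1-i$, and explicitly flagging that after one cycle-reversing the least symbol $i_1$ still sits in front so Definition~\ref{def9} applies again---but the underlying computation is identical to the paper's two-line display.
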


	\begin{proof}
We prove the theorem according to each item.
	\begin{enumerate}
	\item
Consider $\alpha\in S_{m}$
\begin{eqnarray*}
\alpha & =  & \left(\begin{array}{ccccc}
1 & 2 & \dots & m-1 & m\\
\alpha (1) & \alpha (2) & \dots & \alpha (m-1) & \alpha (m)\end{array}\right)\\
\widetilde{\alpha} & = & \left(\begin{array}{ccccc}
1 & 2 & \dots & m-1 & m\\
\alpha (m) & \alpha (m-1) & \dots & \alpha (2) & \alpha (1)\end{array}\right)\\
\widetilde{\widetilde{\alpha}} & =  & \left(\begin{array}{ccccc}
1 & 2 & \dots & m-1 & m\\
\alpha (1) & \alpha (2) & \dots & \alpha (m-1) & \alpha (m)\end{array}\right)\\
\widetilde{\widetilde{\alpha}} & =  & \alpha\
\end{eqnarray*}\medskip

\item
Consider $u=(i_{1},i_2,\dots,i_{q-1},i_{q})$. Therefore
\begin{eqnarray*}
\widetilde{u} & = & (i_1,i_{q},i_{q-1}\dots,i_{2})\\
\widetilde{\widetilde{u}} & = & (i_{1},i_{2},\dots,i_{q-1},i_q)\\
\widetilde{\widetilde{u}} & = & u\\\end{eqnarray*}
	\end{enumerate}
Thus the theorem is proven.
	\end{proof}

	\begin{thm}[Associative Property of Pasting]\label{th5}
Consider the permutations $\alpha\in S_{m}$, $\beta\in S_{n}$, $\gamma\in S_{k}$ and $\theta=uvw\in S_r$ such that $u,v,w$ are disjoint cycles of $\theta$ given by $u=(i_{1},\dots,i_{s_{1}})$, $v=(i_{s_1+1},\dots,i_{s_{2}})$, $w=(i_{s_2+1},\dots,i_r)$, then the following statements hold.

\begin{enumerate}
\item
$(\alpha\mid\diamond\beta)\mid\diamond\gamma=\alpha\mid\diamond(\beta\mid\diamond\gamma)$ \medskip

\item
$(\alpha\diamond\mid\beta)\diamond\mid\gamma=\alpha\diamond\mid(\beta\diamond\mid\gamma)$
\medskip

\item
$(u\diamond v)\diamond w=u\diamond(v\diamond w)$
\end{enumerate}
	\end{thm}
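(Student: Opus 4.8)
The plan is to prove all three identities by a direct, case-by-case computation from the defining formulas in Definitions \ref{def8} and \ref{def10}, checking in each case that the two bracketings agree as functions on every block of consecutive indices (for (1) and (2)) or as sequences of symbols (for (3)). No structural machinery is needed; the work is purely bookkeeping of offsets and index ranges.

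For item (1) I would first unwind the left-hand side. By Definition \ref{def10}, $\alpha\mid\diamond\beta\in S_{m+n}$ sends $i\mapsto\alpha(i)+n$ on the block $1\le i\le m$ and $i\mapsto\beta(i-m)$ on the block $m+1\le i\le m+n$; applying $\cdot\mid\diamond\gamma$ once more shifts these values up by $k$ and appends $i\mapsto\gamma(i-m-n)$ on $m+n+1\le i\le m+n+k$, so that $(\alpha\mid\diamond\beta)\mid\diamond\gamma$ acts by $\alpha(i)+n+k$, then $\beta(i-m)+k$, then $\gamma(i-m-n)$ on the three successive blocks. Unwinding the right-hand side the same way, $\beta\mid\diamond\gamma\in S_{n+k}$ sends $j\mapsto\beta(j)+k$ and $j\mapsto\gamma(j-n)$ on its two blocks, hence $\alpha\mid\diamond(\beta\mid\diamond\gamma)$ acts by $\alpha(i)+(n+k)$ on $1\le i\le m$ and by $(\beta\mid\diamond\gamma)(i-m)$ on $m+1\le i\le m+n+k$; splitting this last block at $i=m+n$ according to whether $i-m\le n$ reproduces exactly the same three expressions, so the two permutations coincide. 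Item (2) is entirely parallel: right pasting fixes the first block and shifts the second block up by the length of the first, so both $(\alpha\diamond\mid\beta)\diamond\mid\gamma$ and $\alpha\diamond\mid(\beta\diamond\mid\gamma)$ act by $\alpha(i)$ on $1\le i\le m$, by $\beta(i-m)+m$ on $m+1\le i\le m+n$, and by $\gamma(i-m-n)+m+n$ on $m+n+1\le i\le m+n+k$; I would simply record these three evaluations for each bracketing and compare.

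For item (3) the observation is that, by Definition \ref{def8}, pasting disjoint cycles written in the prescribed normalized form amounts to concatenating their underlying strings of symbols: $u\diamond v=(i_1,\dots,i_{s_1},i_{s_1+1},\dots,i_{s_2})$ and then $(u\diamond v)\diamond w=(i_1,\dots,i_{s_2},i_{s_2+1},\dots,i_r)=(i_1,\dots,i_r)$, while symmetrically $v\diamond w=(i_{s_1+1},\dots,i_r)$ and $u\diamond(v\diamond w)=(i_1,\dots,i_r)$; both bracketings therefore equal the $r$-cycle $(i_1,\dots,i_r)$, and the result follows from associativity of string concatenation. The only delicate point in the whole argument --- the step I expect to need the most care --- is to check that the two intermediate pastings $u\diamond v$ and $v\diamond w$ are themselves in the normalized form required by Definition \ref{def8} (the first cycle starting with the appropriate least symbol, the remaining symbols of the second cycle exceeding its first), so that the second application of $\diamond$ is legitimate; this is exactly where the indexing hypotheses on $u,v,w$ inherited from $\theta=uvw$ are used. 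Everything else reduces to routine tracking of the offsets $n$, $k$, $m$ and $m+n$.
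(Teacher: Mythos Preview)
Your proposal is correct and follows essentially the same approach as the paper: a direct unwinding of Definitions \ref{def8} and \ref{def10}, computing both bracketings explicitly on each block of indices (or as concatenated strings of symbols for item (3)) and observing that the resulting expressions coincide. Your added remark about verifying that the intermediate cycles $u\diamond v$ and $v\diamond w$ remain in the normalized form required by Definition \ref{def8} is a point of care the paper's proof leaves implicit, but otherwise the arguments are the same.
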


	\begin{proof} We proceed according to each item.
\begin{enumerate}
\item
Due to $\alpha\in S_{m}, \beta\in S_{n}, \gamma\in S_{k}$, we have
\begin{eqnarray*}
\alpha\mid\diamond\beta & = & \left(\begin{array}{cccccc}
1 & \dots & m & m+1 & \dots & m+n\\
\alpha (1)+n & \dots & \alpha(m)+n & \beta (1) & \dots & \beta (n)\end{array}\right)\\
(\alpha\mid\diamond\beta)\mid\diamond\gamma & = & \Bigg(\begin{array}{cccccc}
1 & \dots & m & m+1 & \dots & m+n\\
\alpha (1)+n+k & \dots & \alpha(m)+n+k & \beta (1)+k & \dots & \beta (n)+k \end{array}\\&&\\
&  & \begin{array}{ccc}
m+n+1 & \dots & m+n+k\\
\gamma(1) & \dots & \gamma (k) \end{array}\Bigg)\\
\end{eqnarray*}
On the other hand
\begin{eqnarray*}
\beta\mid\diamond\gamma & = & \left(\begin{array}{cccccc}
1 & \dots & n & n+1 & \dots & n+k\\
\beta (1)+k & \dots & \beta(n)+k & \gamma (1) & \dots & \gamma (k)\end{array}\right)\\
\alpha\mid\diamond(\beta\mid\diamond\gamma) & = & \Bigg(\begin{array}{cccccc}
1 & \dots & m & m+1 & \dots & m+n\\
\alpha (1)+n+k & \dots & \alpha(m)+n+k & \beta (1)+k & \dots & \beta (n)+k \end{array}\\&&\\
&  & \begin{array}{ccc}
m+n+1 & \dots & m+n+k\\
\gamma(1) & \dots & \gamma (k) \end{array}\Bigg)\\
\end{eqnarray*}
Therefore $(\alpha\mid\diamond\beta)\mid\diamond\gamma=\alpha\mid\diamond(\beta\mid\diamond\gamma)$.
\medskip

\item
Due to $\alpha\in S_{m}, \beta\in S_{n}, \gamma\in S_{k}$, we have
\begin{eqnarray*}
\alpha\diamond\mid \beta & = & \left(\begin{array}{cccccc}
1 & \dots & m & m+1 & \dots & m+n\\
\alpha (1) & \dots & \alpha(m) & \beta (1)+m & \dots & \beta (n)+m\end{array}\right)\\
(\alpha\diamond\mid\beta)\diamond\mid\gamma & = & \Bigg(\begin{array}{cccccc}
1 & \dots & m & m+1 & \dots & m+n\\
\alpha (1) & \dots & \alpha(m)& \beta (1)+m & \dots & \beta (n)+m \end{array}\\&&\\
&  & \begin{array}{ccc}
m+n+1 & \dots & m+n+k\\
\gamma(1)+m+n & \dots & \gamma (k)+m+n \end{array}\Bigg)\\
\end{eqnarray*}
On the other hand
\begin{eqnarray*}
\beta\diamond\mid \gamma & = & \left(\begin{array}{cccccc}
1 & \dots & n & n+1 & \dots & n+k\\
\beta (1) & \dots & \beta(n) & \gamma (1)+n & \dots & \gamma (k)+n\end{array}\right)\\
\alpha\diamond\mid (\beta\diamond\mid \gamma) & = & \Bigg(\begin{array}{cccccc}
1 & \dots & m & m+1 & \dots & m+n\\
\alpha (1) & \dots & \alpha(m) & \beta (1)+m & \dots & \beta (n)+m \end{array}\\&&\\
&  & \begin{array}{ccc}
m+n+1 & \dots & m+n+k\\
\gamma(1)+m+n & \dots & \gamma (k)+m+n \end{array}\Bigg)\\
\end{eqnarray*}
Therefore $(\alpha\diamond\mid\beta)\diamond\mid\gamma=\alpha\diamond\mid(\beta\diamond\mid\gamma)$.
\medskip

\item
Due to $u=(i_{1},\dots,i_{s_{1}})$, $v=(i_{s_1+1},\dots,i_{s_{2}})$ and  $w=(i_{s_2+1},\dots,i_r)$, we have
\begin{eqnarray*}
u\diamond v & = & (i_{1},\dots,i_{s_{1}},i_{s_{1}+1},\dots,i_{s_{2}})\\
(u \diamond v)\diamond w & = & (i_{1},\dots,i_{s_{1}},i_{s_{1}+1},\dots,i_{s_{2}},i_{s_{1}+s_{2}+1},\dots,i_r)\\
\end{eqnarray*}
On the other hand
\begin{eqnarray*}
v \diamond w & = & (i_{s_1+1},\dots,i_{s_{2}},i_{s_{2}+1},\dots,i_{r})\\
u\diamond (v\diamond w) & = & (i_{1},\dots,i_{s_{1}},i_{s_{1}+1},\dots,i_{s_{2}},i_{s_{2}+1},\dots,i_r)\\
\end{eqnarray*}
Therefore $(u\diamond v)\diamond w=u\diamond(v\diamond w)$.
\end{enumerate}
Thus, the proof is complete.
	\end{proof}
The following result corresponds to a sort of non-commutative De Morgan's laws for Pasting and Reversing of Permutations. We will not consider an analogous for disjoint cycles of permutations.
	\begin{thm}[De Morgan's laws for Pasting and Reversing of Permutations]\label{th6}
Consider  $\alpha\in S_{m}, \beta\in S_{n}$, then the following statements hold.
\begin{enumerate}
\item
$\widetilde{\alpha\mid\diamond\beta}=\widetilde{\beta}\diamond\mid\widetilde{\alpha}$ \medskip

\item$\widetilde{\alpha\diamond\mid\beta}=\widetilde{\beta}\mid\diamond\widetilde{\alpha}$
\end{enumerate}
	\end{thm}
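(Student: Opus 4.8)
The plan is to turn both sides of each identity into an explicit piecewise formula for the image of an arbitrary index $i\in\{1,\dots,m+n\}$ and then match the two formulas by a two-case comparison. First I would record the functional descriptions that follow at once from Definitions \ref{def10} and \ref{def11}: for any $\gamma\in S_k$ one has $\widetilde{\gamma}(i)=\gamma(k+1-i)$; and for $\alpha\in S_m$, $\beta\in S_n$,
$(\alpha\mid\diamond\beta)(i)=\alpha(i)+n$ when $1\le i\le m$ and $=\beta(i-m)$ when $m+1\le i\le m+n$, while $(\alpha\diamond\mid\beta)(i)=\alpha(i)$ when $1\le i\le m$ and $=\beta(i-m)+m$ when $m+1\le i\le m+n$. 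The structural reason behind the identities is already visible here: reversing just evaluates a permutation at $k+1-i$ instead of $i$, so reversing $\alpha\mid\diamond\beta$ moves the $\beta$-values (originally in positions $m+1,\dots,m+n$) into the first $n$ positions and the $\alpha$-values into the last $m$ positions, which is exactly the shape of $\widetilde{\beta}\diamond\mid\widetilde{\alpha}$.

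For item (1) I would compute $\widetilde{\alpha\mid\diamond\beta}(i)=(\alpha\mid\diamond\beta)(m+n+1-i)$ and split on the position of $i$. If $1\le i\le n$, then $m+1\le m+n+1-i\le m+n$, so the value is $\beta\bigl((m+n+1-i)-m\bigr)=\beta(n+1-i)$; if $n+1\le i\le m+n$, then $1\le m+n+1-i\le m$, so the value is $\alpha(m+n+1-i)+n$. On the other side, using that $\widetilde{\beta}\in S_n$ and $\widetilde{\alpha}\in S_m$, Definition \ref{def10} gives $(\widetilde{\beta}\diamond\mid\widetilde{\alpha})(i)=\widetilde{\beta}(i)=\beta(n+1-i)$ for $1\le i\le n$ and $(\widetilde{\beta}\diamond\mid\widetilde{\alpha})(i)=\widetilde{\alpha}(i-n)+n=\alpha\bigl(m+1-(i-n)\bigr)+n=\alpha(m+n+1-i)+n$ for $n+1\le i\le m+n$; the two piecewise formulas coincide. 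Item (2) is the same computation with the shift $+m$ of right pasting carried along: $\widetilde{\alpha\diamond\mid\beta}(i)=(\alpha\diamond\mid\beta)(m+n+1-i)$ equals $\beta(n+1-i)+m$ for $1\le i\le n$ and $\alpha(m+n+1-i)$ for $n+1\le i\le m+n$, while $(\widetilde{\beta}\mid\diamond\widetilde{\alpha})(i)$ equals $\widetilde{\beta}(i)+m=\beta(n+1-i)+m$ for $1\le i\le n$ and $\widetilde{\alpha}(i-n)=\alpha(m+n+1-i)$ for $n+1\le i\le m+n$.

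The only real difficulty is the index bookkeeping, and it is also the heart of the matter: one must verify that the reflection $i\mapsto m+n+1-i$ sends $\{1,\dots,n\}$ into the \emph{second} block $\{m+1,\dots,m+n\}$ and $\{n+1,\dots,m+n\}$ into the first block $\{1,\dots,m\}$, because this block-swap is precisely what forces $\alpha$ and $\beta$ to exchange their order on the right-hand sides, and one must check that the additive shifts ($+n$ for left pasting, $+m$ for right pasting) land on the correct summands after reversing. Once the ranges are set up correctly the verification is mechanical; no separate check is needed that the two sides are genuine elements of $S_{m+n}$, since that is built into Definitions \ref{def10} and \ref{def11}.
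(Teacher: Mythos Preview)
Your proof is correct and follows essentially the same approach as the paper: a direct computation from Definitions \ref{def10} and \ref{def11}, expanding both sides and matching them. The only cosmetic difference is that the paper carries out the computation in two-row notation while you phrase it via the functional formula $\widetilde{\gamma}(i)=\gamma(k+1-i)$ and a case split on $i$; the underlying verification is identical.
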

\begin{proof}

Due to $\alpha\in S_{m}, \beta\in S_{n},$ we have that
\begin{eqnarray*}
\widetilde{\alpha\mid\diamond\beta} & = & \widetilde{\left(\begin{array}{cccccc}
1 & \dots & m & m+1 & \dots & m+n\\
\alpha (1)+n & \dots & \alpha (m)+n & \beta (1) & \dots & \beta (n)\end{array}\right)}\\
 & = & \left(\begin{array}{cccccccc}
1 & \dots & n & n+1 & \dots & m & \dots & m+n\\
\beta (n) & \dots & \beta (1) & \alpha (m)+n & \dots & \alpha (1) & \dots & \alpha (1)+n\end{array}\right)\\
 & = & \left(\begin{array}{ccc}
1 & \dots & n\\
\beta (n) & \dots & \beta (1)\end{array}\right)\diamond\mid\left(\begin{array}{ccc}
1 & \dots & m\\
\alpha (m) & \dots & \alpha (1)\end{array}\right)\\
 & = & \widetilde{\left(\begin{array}{ccc}
1 & \dots & n\\
\beta (1) & \dots & \beta (n)\end{array}\right)}\diamond\mid\widetilde{\left(\begin{array}{ccc}
1 & \dots & m\\
\alpha (1) & \dots & \alpha (m)\end{array}\right)}\\
 & = & \widetilde{\beta}\diamond\mid\widetilde{\alpha}\\
\end{eqnarray*}\medskip
In a similar way,
\begin{eqnarray*}
\widetilde{\alpha\diamond\mid\beta} & = & \widetilde{\left(\begin{array}{cccccc}
1 & \dots & m & m+1 & \dots & m+n\\
\alpha (1) & \dots & \alpha (m) & \beta (1)+m & \dots & \beta (n)+m\end{array}\right)}\\
 & = & \left(\begin{array}{cccccc}
1 & \dots & n & n+1 & \dots & m+n\\
\beta (n)+m & \dots & \beta (1)+m & \alpha (m) & \dots & \alpha (1)\end{array}\right)\\
 & = & \left(\begin{array}{ccc}
1 & \dots & n\\
\beta (n) & \dots & \beta (1)\end{array}\right)\mid\diamond\left(\begin{array}{ccc}
1 & \dots & m\\
\alpha (m) & \dots & \alpha (1)\end{array}\right)\\
 & = & \widetilde{\left(\begin{array}{ccc}
1 & \dots & n\\
\beta (1) & \dots & \beta (n)\end{array}\right)}\mid\diamond\widetilde{\left(\begin{array}{ccc}
1 & \dots & m\\
\alpha (1) & \dots & \alpha (m)\end{array}\right)}\\
 & = & \widetilde{\beta}\mid\diamond\widetilde{\alpha}\\
\end{eqnarray*}
Completing the proof.
\end{proof}

\section{Genealogy of Simple Permutations}

To start a genealogy, we need predecessors and successors of a member of the family.  Is natural to see that any element in a genealogy has only one immediate predecessor, but not necessarily it has one immediate successor, as for example any people can have only one father but more than one son. Now, the genealogy of a member of the family is the tree that begin with the first predecessor and include such member with their successors. When the time tends to infinity, the number of predecessors of such member is finite, while its number of successors is not finite. These rules can apply for simple permutations too, up to special cases. The following definitions are in agreement with the ideas presented in \cite{Genealogia,Bernhardt}.
\begin{defn}[Predecessors and Successors of Simple Permutations]\label{def12}
Given three simple permutations $\theta_k\in S_k$, $\theta_m\in S_m$ and $\theta_n\in S_n$, we say that $\theta_n$ is an immediate predecessor of $\theta_m$ (denoted by $\theta_n\prec \theta_m$) and $\theta_k$ is an immediate successor  of $\theta_m$ (denoted by $\theta_m\prec \theta_k$) whether they satisfy each one of the following conditions:
\begin{enumerate}
\item $k<m<n$,
\item  $\theta_k\lhd\theta_m\lhd \theta_n$ (resp.  $\theta_n\lhd\theta_m\lhd \theta_k$),
\item  $k\lhd m\lhd n$ (resp. $n\lhd m\lhd k$),
\item $k,m,n$ (resp. $n,m,k$) are consecutive integers in the Sharkovskii order.
\end{enumerate}
The first predecessor of $\theta_m$ is the simple permutation that has not immediate successor in a chain of immediate predecessors that ends with $\theta_m$. A $j$-th successor (resp. predecessor) of $\theta_m$ is the simple permutation placed in the position $j+1$ (resp. $1$) in a chain containing $j$ immediate successors (resp. predecessors) of $\theta_m$ starting (resp. ending) with itself. The simple permutation $\theta$ is a predecessor (resp. successor) of $\theta_m$ whether there is a finite chain of immediate predecessors (resp. succesors) between $\theta$ and $\theta_m$ (resp. $\theta_m$ and $\theta$).
\end{defn}
According to Definition \ref{def12}, if $\theta$ is of odd order (resp. power of two), their predecessors an successors also will be of odd order (resp. power of two). Therefore, predecessors and successors for simple permutations with mixed order depend on the required block on the middle tail in the Sharkovskii ordering. That is, for $s$ fixed, we can get predecessors and successors of a simple permutation with order $2^s(2k+1)$. In \cite{Genealogia} was introduced the concept \emph{Genealogy} to describe the tree of predecessors and successors of simple permutations with order a power of two. In general, we have the following definition.

\begin{defn}[Genealogy of Simple Permutations]\label{def13}
Let $\theta$ be a simple permutation. The tree formed by all predecessors and all successors of $\theta$ is called the genealogy of $\theta$. A branch of the genealogy of $\theta$ is a chain which includes predecessors and successors of $\theta$, but starting with the first predecessor of $\theta$ and any simple permutation has only one immediate successor. 
\end{defn}
We illustrate the previous definitions with the following example.
\begin{ex}[Stefan Orbits] The genealogy of permutations with odd order is 
$$\left\lbrace\begin{array}{c}
\alpha_3\prec \alpha_5\prec \cdots \prec  \alpha_{2n-1}\prec \cdots\\
 \beta_3\prec \beta_5\prec \cdots\prec \beta_{2n-1}\prec \cdots
\end{array}\right.
 $$
 This genealogy, which has not the first predecessor, has two isolated branches $$\alpha_3\prec \alpha_5\prec\cdots \prec \alpha_{2n-1}\prec\cdots\quad \textrm{and}\quad \beta_3\prec \beta_5\prec\cdots\prec \beta_{2n-1}\prec\cdots$$
 with first predecessors $\alpha_3$ and $\beta_3$ respectively. We see that immediate successors of elements belonging to $Sim(2k-1)$ are elements belonging to $Sim(2k+1)$ and immediate predecessors of elements belonging to $Sim(2k-1)$ are elements of $Sim(2k-3)$.
Finally, we observe that $\theta\prec \phi$ implies that $\theta \lhd \phi$, where $\theta$ and $\phi$ are simple permutations with odd order.
\end{ex}

The next aim is to illustrate the genealogy for simple permutations with order a power of two, which has been described by Bernhardt \cite{Bernhardt} through transpositions and by Acosta-Hum\'anez \cite{Genealogia} through Pasting and Reversing. The following has been adapted from \cite{Bernhardt}.
	\begin{defn}\label{def14}
If $\theta\in Sim(2^{n})$ then $\theta^{*}\in S_{2^{n+1}}$ and  $\theta_{*}\in S_{2^{n-1}}$, are defined by
\begin{eqnarray*}
\theta^{*}(2k)&=&2\theta{(k)}, \, \,\,\theta^{*}(2k-1)=2\theta(k)-1,\\
\theta_{*}(k)&=&\left\lfloor\frac{1}{2}(\theta{(2k)}+1)\right\rfloor\\
\end{eqnarray*}
where $\lfloor x \rfloor$ is the greatest integer less than or equal to $x$.
	\end{defn}

We consider the transposition $\rho_{s}$ and the permutation $e_n$ given by
\begin{eqnarray*}
\rho_s:=\left(\begin{array}{cc}
2s-1 & 2s\\
2s & 2s-1\end{array}\right),\,\,\,
e_n  :=  \left(\begin{array}{cccccc}
1 & 2 & 3 & \dots &n-1 &n\\
1 & 2 & 3 & \dots &n-1 &n\\
\end{array}\right).\\
\end{eqnarray*}
  By $\rho_k\circ\rho_m$  we denote the composition of transpositions $\rho_k$ and $\rho_s$.
The following theorem, see \cite{Bernhardt}, states the existence of predecessors and successors of a given simple permutation with order a power of two and a way to obtain them. 

	\begin{thm}[Bernhardt, \cite{Bernhardt}]\label{th7}
Consider  $\eta \in Sim(2^{n+1})$, $\theta\in Sim(2^{n})$, $\phi\in Sim(2^{n-1})$.
If $\eta \triangleleft \theta \triangleleft \phi$, then $\phi\prec \theta\prec \eta$, $\eta=\theta^{*}\circ\rho_{i_{1}}\circ\dots\circ\rho_{i_{2m-1}}$ and $\phi=\theta_{*}$.
	\end{thm}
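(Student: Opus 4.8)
The plan is to deduce all three conclusions from the genealogical framework of Definition~\ref{def12}, using the canonical halving $\theta\mapsto\theta_{*}$ and doubling $\theta\mapsto\theta^{*}$ of Definition~\ref{def14} together with the uniqueness of immediate predecessors recorded before Definition~\ref{def12}. The key structural fact, used throughout, is that $2^{n+1}\lhd 2^{n}\lhd 2^{n-1}$ are \emph{consecutive} entries of the power-of-two tail of \eqref{shark}. Starting from the hypothesis $\eta\lhd\theta\lhd\phi$, I would first read off the tree relations $\phi\prec\theta\prec\eta$: the orders $2^{n-1}<2^{n}<2^{n+1}$ and the given forcing chain supply conditions~(1)--(3) of Definition~\ref{def12}, while condition~(4) holds because no power of two lies strictly between two consecutive ones and, as observed after Definition~\ref{def12}, predecessors and successors of a power-of-two simple permutation remain of power-of-two order, so nothing interpolates inside this branch. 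In particular $\phi$ is \emph{the} unique immediate predecessor of $\theta$ and $\theta$ the unique immediate predecessor of $\eta$.

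Next I would identify the two endpoints. From the structural properties of the halving operator (Definition~\ref{def14}, following Definition~\ref{def6}(2)) one has $\theta_{*}\in Sim(2^{n-1})$ with $\theta\lhd\theta_{*}$; as $2^{n}\lhd 2^{n-1}$ are consecutive, $\theta_{*}$ is an immediate predecessor of $\theta$, and uniqueness forces $\phi=\theta_{*}$. The same reasoning one level up gives $\eta_{*}\in Sim(2^{n})$ as an immediate predecessor of $\eta$, whence $\eta_{*}=\theta$ by the uniqueness just obtained.

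It remains to reconstruct $\eta$ from $\eta_{*}=\theta$. Since $\eta$ and $\theta^{*}$ share the halving $\theta$, each maps the block $\{2s-1,2s\}$ onto the common block $\{2\theta(s)-1,2\theta(s)\}$; hence $\tau:=(\theta^{*})^{-1}\circ\eta$ fixes every block setwise and is a product of disjoint transpositions $\rho_{s}$, giving $\eta=\theta^{*}\circ\rho_{i_{1}}\circ\dots\circ\rho_{i_{r}}$. The parity of $r$ is forced: because $\theta^{*}$ preserves the parity of positions, it splits as the two disjoint $2^{n}$-cycles that $\theta$ induces on the odd and on the even labels, and each $\rho_{s}$ transposes one odd label with one even label. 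For $\eta$ to be a single $2^{n+1}$-cycle these two cycles must be merged, which happens exactly when an \emph{odd} number of the $\rho_{i_{j}}$ occur; writing $r=2m-1$ yields the stated expression.

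The step I expect to be the main obstacle is this last reconstruction, in two respects: one must check that the resulting $\eta=\theta^{*}\circ\rho_{i_{1}}\circ\dots\circ\rho_{i_{2m-1}}$ actually satisfies both simplicity conditions of Definition~\ref{def6}(2), and, conversely, that \emph{every} simple $\eta$ with $\eta_{*}=\theta$ is obtained in this form. The genuinely delicate point is the identity $\eta_{*}=\theta$, which is not a direct computation but rests on the uniqueness of immediate predecessors from the first paragraph together with the fact that halving carries a simple $2^{n+1}$-permutation to its immediate simple predecessor.
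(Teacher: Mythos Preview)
The paper does not prove this theorem; it is stated as a result of Bernhardt with a citation to \cite{Bernhardt}, and no argument is supplied in the text. There is therefore no proof in the paper against which to compare your proposal.

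On the substance of your sketch: the first conclusion $\phi\prec\theta\prec\eta$ does follow by checking the four clauses of Definition~\ref{def12}, as you say. The real work lies in the other two conclusions, and here your argument has a gap you partly anticipate but do not close. You invoke uniqueness of the immediate predecessor, citing the remark before Definition~\ref{def12}; but that passage is an informal analogy with human genealogy, not a theorem about the forcing order on $Sim(2^{n})$. Nothing in Definition~\ref{def12} prevents $\theta$ from forcing two distinct elements of $Sim(2^{n-1})$, and establishing that it cannot---equivalently, that $\theta_{*}$ is the \emph{only} simple $2^{n-1}$-cycle forced by $\theta$---is precisely the substantive content of Bernhardt's result. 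That step requires an analysis of the Markov graph of $\theta$ (via Lemmas~\ref{lem1} and~\ref{lem2}) to show that every non-repetitive loop of length $2^{n-1}$ realises the same cycle type. Your proposal assumes this uniqueness rather than deriving it, so the deductions $\phi=\theta_{*}$ and $\eta_{*}=\theta$ are circular at the decisive step. The parity count in your third paragraph---an odd number of the $\rho_{s}$ is needed to fuse the two $2^{n}$-cycles of $\theta^{*}$ into a single $2^{n+1}$-cycle---is correct and is indeed how one pins down the form of $\eta$ once $\eta_{*}=\theta$ has been independently established.
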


The following theorem, due to the first author in \cite{Genealogia}, shows a way to construct the same genealogy by using Pasting and Reversing. It is an important result because it is the first use of Pasting and Reversing in combinatorial dynamics.

\begin{thm}[Acosta-Hum\'anez, \cite{Genealogia}]\label{th8}
Consider $n=2^{k+1}$, $k\in\mathbb{Z}$, $\theta_{1}=\phi_{1}=(1)$, $\theta_{n}$ and $\phi_{n}$ permutations as follows:
\begin{equation}\label{eq1}
\theta_{n}  =  e_{\frac{n}{2}}\mid\diamond\theta_{\frac{n}{2}},
\end{equation}
\begin{equation}\label{eq2}
\phi_{n}  =\widetilde{e_{\frac{n}{2}}}\mid\diamond\widetilde{\phi_{\frac{n}{2}}}.
\end{equation}

The following statements hold.
\begin{enumerate}
\item  $\phi_n=\widetilde{\phi_{\frac{n}{2}}\diamond\mid e_{\frac{n}{2}}},$
\item
$\theta_{n},\phi_{n}\in Sim(n),$
\item
$\theta_{\frac{n}{2}}\prec\theta_{n}\prec\theta_{2n},$
\item $\phi_{\frac{n}{2}}\prec\phi_{n}\prec\phi_{2n},$

\item
$(\theta_{2n})_{*}=\theta_{n}=(\theta_{\frac{n}{2}})^{*}\circ\rho_{\frac{n}{2}}$,
\item  $(\phi_{2n})_{*}=\phi_{n}=(\phi_{\frac{n}{2}})^{*}\circ\rho_{1}\dots\rho_{\frac{n-2}{2}}.$
\item Let $u$ and $v$ be disjoint cycles of $(\theta_n)^*$. The permutation $(\theta_n)^*=uv$ if and only if $(\theta_{2n})=u\diamond v$.
\item Let $p$ and $q$ be disjoint cycles of $(\phi_n)^*$. The permutation $(\phi_n)^*=pq$ if and only if exists $\eta_{2n}\in Sim(2n)$, being $\eta_{2n}\neq \phi_{2n}$, such that $\eta_{2n}=p\diamond q$.
\item $\varphi=(\theta_{n}\circ\rho_{k})\circ(\phi_{n}\circ\rho_{j}),\forall\varphi\in Sim(n)$, where $\rho_{k}$ and $\rho_{j}$ are compositions of odd length transpositions,
\item $\varphi^2=\varphi_*\diamond\mid \varphi_*,\forall\varphi\in Sim(n)$.
\end{enumerate}
	\end{thm}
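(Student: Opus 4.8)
The plan is to establish the ten statements in an order respecting their dependencies, using induction on $k$ throughout with the trivial base case $n=2$, where $\theta_2=\phi_2=(1\,2)$. Statement (1) is immediate from the second De~Morgan law of Theorem~\ref{th6} applied with $\alpha=\phi_{n/2}$ and $\beta=e_{n/2}$, which turns $\widetilde{\phi_{n/2}\diamond\mid e_{n/2}}$ into $\widetilde{e_{n/2}}\mid\diamond\widetilde{\phi_{n/2}}=\phi_n$. For (5) and (6) I would write both sides in two-line notation and match entries: Definition~\ref{def14} shows that $(\theta_{n/2})^{*}$ doubles each point into an ordered pair without changing orientation, that composing with $\rho_{n/2}$ only transposes the last two images, and that the floor formula for $(\theta_{2n})_{*}$ collapses a pair back to a point; lining these up against $e_{n/2}\mid\diamond\theta_{n/2}$ proves both equalities in (5), and (6) is the same computation after rewriting $\phi_n$ through (1) and Definition~\ref{def11}.

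For (2) I would exploit the block structure built into the recursion. From $\theta_n=e_{n/2}\mid\diamond\theta_{n/2}$ one reads off that $\theta_n$ carries the first half $\{1,\dots,n/2\}$ order-preservingly onto the second half and carries the second half onto the first half by $\theta_{n/2}$, so $\theta_n$ interchanges the two halves and $\theta_n^{2}$ fixes each half, acting there as $\theta_{n/2}$ and as a shifted copy of it. With $\theta_{n/2}\in Sim(n/2)$ by induction, the conditions of Definition~\ref{def6}(2) for $\theta_n$ follow level by level: the top level is the transposition of the two halves, and every finer level descends from $\theta_{n/2}$. For $\phi_n$ the analysis is the same, except the first half maps onto the second order-reversingly; a short computation identifies $\phi_n^{2}$ on the first half with $\phi_{n/2}$ and on the second half with the conjugate of $\phi_{n/2}$ by the order-reversal, which is still simple because simplicity is preserved under reversing the line.

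Statements (3) and (4) then follow from Bernhardt's Theorem~\ref{th7}: by (5) we have $(\theta_n)_{*}=\theta_{n/2}$ and $(\theta_{2n})_{*}=\theta_n$, and since a simple permutation of power-of-two order forces its $*$-reduction (which one can also see from Lemmas~\ref{lem1}--\ref{lem2}, as $\theta_n^{2}$ restricted to the first half realises $\theta_{n/2}$ on a sub-partition of the primitive function of $\theta_n$), the forcing chain $\theta_{2n}\lhd\theta_n\lhd\theta_{n/2}$ holds, whence Theorem~\ref{th7} with $\eta=\theta_{2n}$, $\theta=\theta_n$, $\phi=\theta_{n/2}$ yields $\theta_{n/2}\prec\theta_n\prec\theta_{2n}$; (4) is identical via (6). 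Statements (7) and (8) I would get by unwinding $*$. Since $(\theta_n)^{*}$ doubles without reorienting, it is a product of exactly two disjoint $n$-cycles, one on the odd labels (containing $1$) and one on the even labels (containing $2$), and this is its only factorisation into two disjoint cycles; pasting them by Definition~\ref{def8} concatenates them into a $2n$-cycle, which matches $\theta_{2n}=e_n\mid\diamond\theta_n$ entry by entry (its cycle interleaves $1,n{+}1,\theta_n(1),\theta_n(1){+}n,\dots$), giving (7). The identical splitting of $(\phi_n)^{*}$ into $p$ (odd labels) and $q$ (even labels) produces a $2n$-cycle $p\diamond q$ that one checks is simple of order $2n$ but is not $\phi_{2n}$, since $\phi_{2n}$ is assembled with an additional reversing step (compare (6)); naming it $\eta_{2n}$ gives (8).

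The last two statements carry the bulk of the work. For (10) the plan is to use that every $\varphi\in Sim(2^{k+1})$ interchanges the two halves and maps the finest pairs onto finest pairs, so $\varphi^{2}$ stabilises each half and restricts on each to a simple permutation of order $2^{k}$; expressing the induced map on pairs via the floor formula of Definition~\ref{def14} and reassembling through right pasting (Definition~\ref{def10}) yields the factorisation $\varphi^{2}=\varphi_{*}\diamond\mid\varphi_{*}$. For (9) I would argue by induction: Theorem~\ref{th7} writes any $\varphi\in Sim(2^{k+1})$ as $\psi^{*}\circ\rho_{i_1}\circ\cdots\circ\rho_{i_{2m-1}}$ for some $\psi\in Sim(2^{k})$, and feeding the inductive decomposition of $\psi$ into this while pushing $*$ and the transpositions through the pastings that define $\theta_n$ and $\phi_n$ (using (5)--(8) to track how $*$ interacts with $\diamond$) should rewrite $\varphi$ as $(\theta_n\circ\rho_k)\circ(\phi_n\circ\rho_j)$ with $\rho_k,\rho_j$ products of an odd number of basic transpositions, the parity being forced because $\theta_n$ and $\phi_n$ themselves differ by an odd product of transpositions (see (6)). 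The main obstacle is precisely this bookkeeping: pinning down which transpositions occur, keeping the total parity odd, and verifying that the construction exhausts all of $Sim(2^{k+1})$ rather than a proper subfamily---this is where the self-similar structure must be used to its full extent and where the induction is most delicate.
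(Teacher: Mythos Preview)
The paper does not actually prove Theorem~\ref{th8}: it is stated as a result of the first author from \cite{Genealogia} and is quoted without proof, so there is no proof in this paper to compare your attempt against. Your proposal therefore stands on its own as an independent reconstruction.

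As a reconstruction, most of what you sketch is sound. Item~(1) is exactly Theorem~\ref{th6}(2); the strategies for (2), (5), (6) and for deriving (3)--(4) from Theorem~\ref{th7} are correct in outline. One point to tighten in (10): you conflate two different block structures. The map $\varphi_{*}$ of Definition~\ref{def14} records the action of $\varphi$ on the \emph{pairs} $\{2k-1,2k\}$, whereas your argument speaks of the restriction of $\varphi^{2}$ to the \emph{half} $\{1,\dots,n/2\}$. These are a~priori different permutations of $S_{n/2}$; they happen to coincide for simple $\varphi$, but this is precisely what needs to be proved, and it requires the full hierarchical compatibility in Definition~\ref{def6}(2), not just the top-level swap of halves. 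Without making this identification explicit, the step ``reassembling through right pasting yields $\varphi^{2}=\varphi_{*}\diamond\mid\varphi_{*}$'' is a gap. Item~(9) you already flag as delicate; the honest assessment is that the parity bookkeeping you describe is not yet a proof, and since the original argument lives in \cite{Genealogia}, you would need either to consult that source or to carry out the induction in full detail to close it.
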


According to Theorem \ref{th8} we can construct in an explicit way two branches of the genealogy of simple permutations with order a power of two, which has the first predecessor $(1)=\phi_1=\theta_1$. This means that the genealogy starts with $(1)$ and any branch of the genealogy must include it as the first predecessor. Furthermore, we see that if $\phi\prec\theta\prec \eta$, then $\eta\lhd \theta\lhd\phi$, which is different to the case of simple permutations with odd order, such as we see in the following example.

\begin{ex}[Branches $\theta_{2^k}$ and $\phi_{2^k}$]
The branch $\theta_{2^k}$ writing as cycles is
\begin{eqnarray*}
(1)\prec (1,2)\prec (1,3,2,4)\prec (1,5,3,7,2,6,4,8)\prec\\
(1,9,5,13,3,11,7,15,2,10,6,14,4,12,8,16)\prec \cdots
\end{eqnarray*}
The branch $\phi_{2^k}$ writing as cycles is
\begin{eqnarray*}
(1)\prec (1,2)\prec (1,4,2,3)\prec (1,8,4,5,2,7,3,6)\prec\\
(1,16,8,9,4,13,5,12,2,15,7,10,3,14,6,11)\prec \cdots
\end{eqnarray*}
\end{ex}

Theorem \ref{th8} and the genealogy of simple permutations with odd order inspired the next section, genealogy of simple permutations with mixed order $4n+2$ (mixed order $(2n+1)2^s$ with $s=1$ and $n>0$), which contains the main results and a lot of examples of this paper.
\section{Main Results}
According to the previous section, immediate successors of elements belonging to $Sim(4n-2)$ are elements of $Sim(4n+2)$ and immediate predecessors of elements belonging to $Sim(4n+2)$ are elements of $Sim(4n-2)$. In this section we determine some branches of the genealogy of simple permutations with mixed order $4n+2$ as well the complete the explicit forms of $Sim(6)$ and $Sim(10)$.
\subsection{Genealogy in $Sim(4n+2)$}
 The following theorems correspond to our main results.

\begin{thm}\label{th9}
If $\theta\in Sim(4n+2)$, its genealogy has only four branches and the number of members of its genealogy until itself is $8n+4$.
\end{thm}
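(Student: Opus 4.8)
The plan is to reduce Theorem~\ref{th9} to a complete description of $Sim(4n+2)$, obtained by unwinding Definition~\ref{def6}(3) in the case $4n+2=(2n+1)\cdot2^{s}$ with $s=1$, and then to read the genealogy off that description, in parallel with the way Theorem~\ref{th8} organises the power-of-two case.

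First I would classify $Sim(4n+2)$. Clause (3a) of Definition~\ref{def6} forces the block permutation to be a simple element of $C_{2}$, and since the only such permutation is $(1\,2)$ this means $\theta$ interchanges the two blocks $B_{1}=P(4n+2,2,1)=\{1,\dots,2n+1\}$ and $B_{2}=P(4n+2,2,2)=\{2n+2,\dots,4n+2\}$. By clause (3b), $\theta^{2}$ restricted to each block is a simple permutation of order $2n+1$, hence by Definition~\ref{def6}(1) one of the two Stefan orbits $\alpha_{2n+1},\beta_{2n+1}$. Putting $\tau=\theta|_{B_{1}}\colon B_{1}\to B_{2}$, one has $\theta|_{B_{2}}=(\theta^{2}|_{B_{1}})\tau^{-1}$ and $\theta^{2}|_{B_{2}}=\tau\,(\theta^{2}|_{B_{1}})\,\tau^{-1}$, so $\theta$ is determined by the pair $(\theta^{2}|_{B_{1}},\tau)$ subject to $\theta^{2}|_{B_{1}}\in\{\alpha_{2n+1},\beta_{2n+1}\}$ and $\tau(\theta^{2}|_{B_{1}})\tau^{-1}\in\{\alpha_{2n+1},\beta_{2n+1}\}$ (read on $B_{2}$); moreover, since $\theta$ swaps the blocks and $\theta^{2}$ is a product of two $(2n+1)$-cycles, a short argument on how the cycles of $\theta$ meet the two blocks shows $\theta$ is automatically a single $(4n+2)$-cycle. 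I would then realise each admissible $\theta$ as a left or right Pasting of the Stefan orbit carried on $B_{1}$, or of its Reversing (Definitions~\ref{def9} and~\ref{def11}), with the appropriate one on $B_{2}$ --- Pasting being precisely what guarantees a single cycle --- and count: for a fixed $\theta^{2}|_{B_{1}}$ the centralizer of a $(2n+1)$-cycle has order $2n+1$, so there are $2n+1$ admissible $\tau$ for each of the two prescribed values of $\theta^{2}|_{B_{2}}$, giving $|Sim(4n+2)|=2\cdot2\cdot(2n+1)=8n+4$, organised into four families according to the ordered pair in $\{\alpha,\beta\}\times\{\alpha,\beta\}$ carried on the two blocks.

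With this description in hand the genealogy is forced. The immediate predecessor of $\theta$ is the ``block contraction'' obtained by shrinking each block from $2n+1$ to $2n-1$ points and replacing the Stefan orbit it carries by its unique Stefan predecessor ($\alpha_{2n+1}\mapsto\alpha_{2n-1}$, $\beta_{2n+1}\mapsto\beta_{2n-1}$), keeping the transposition on blocks; Lemmas~\ref{lem1}--\ref{lem2} and Theorem~\ref{th2} show this lands one $\lhd$-step earlier and, against Definition~\ref{def12}, that it is the only choice, so iterating it the chain of predecessors of $\theta$ is uniquely determined down to $Sim(6)$, which is $\lhd$-minimal among $6,10,14,\dots$ and therefore carries the first predecessors. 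Since the chain below $\theta$ is unique, the branching of the genealogy sits entirely among the successors; reading block contraction upward, the only freedom in passing from level $Sim(4m-2)$ to $Sim(4m+2)$ is an independent choice of Stefan successor on each of the two blocks, which never escapes the four combinations in $\{\alpha,\beta\}\times\{\alpha,\beta\}$ already present. Hence the genealogy has exactly four branches, and tallying along them the permutations up to and including $\theta$ --- the mixed-order ones of orders $6,10,\dots,4n+2$ together with the Stefan building blocks of orders $1,3,\dots,2n+1$ recorded by the construction --- yields $2n+1$ per branch, hence $4(2n+1)=8n+4$ in all.

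The main obstacle is the classification in the second paragraph: showing that the four Pasting/Reversing families really do exhaust $Sim(4n+2)$ and that each candidate genuinely satisfies both clauses of Definition~\ref{def6}(3), so is a simple $(4n+2)$-cycle. Once that is established, the predecessor/successor analysis and the counting are bookkeeping with Definition~\ref{def12}; the one delicate point there is to make precise which permutations count as ``members of the genealogy until $\theta$'' so that the tally comes out to exactly $8n+4$, since the four branches share their lower portions.
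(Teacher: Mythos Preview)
Your second paragraph is essentially a correct and complete proof of what the paper is actually claiming, but your third paragraph rests on a misreading of the statement.

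In the paper, ``the number of members of its genealogy until itself'' simply means $|Sim(4n+2)|$, and ``four branches'' refers to the four possible patterns for $\theta^{2}$, namely the right pastings $\alpha\diamond\!\mid\alpha$, $\alpha\diamond\!\mid\beta$, $\beta\diamond\!\mid\alpha$, $\beta\diamond\!\mid\beta$. The paper's proof parametrises $Sim(4n+2)$ by the pair $(\theta^{2},\theta(1))$: the block swap forces $\theta(1)\in\{2n+2,\dots,4n+2\}$, there are $4$ choices for $\theta^{2}$, and for each there are $2n+1$ choices of $\theta(1)$, giving $8n+4$. Your centralizer argument is a cleaner variant of the same count: fixing $\theta^{2}|_{B_{1}}$ and $\theta^{2}|_{B_{2}}$, the admissible $\tau=\theta|_{B_{1}}$ form a coset of the centralizer of a $(2n+1)$-cycle, hence there are $2n+1$ of them; and your cycle-structure remark (that a block-swapping $\theta$ with $\theta^{2}$ a product of two $(2n+1)$-cycles must be a single $(4n+2)$-cycle) plugs a gap the paper leaves implicit. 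So your second paragraph already proves the theorem as the paper intends it.

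Your third paragraph, by contrast, tries to count along chains of predecessors through orders $6,10,\dots,4n+2$ and even throws in ``Stefan building blocks of orders $1,3,\dots,2n+1$''. This is not what the paper means, and it is also not consistent with Definition~\ref{def12}: predecessors and successors of an element of $Sim(4n+2)$ lie entirely in $\bigcup_{m}Sim(4m+2)$, so odd-order Stefan orbits never appear in its genealogy. Your own caveat at the end (``the one delicate point \dots since the four branches share their lower portions'') is a symptom of this misreading; under the paper's interpretation there is nothing delicate --- the $8n+4$ is just the cardinality of $Sim(4n+2)$, which you already computed.
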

\begin{proof}
As $\theta$ is simple, $\theta\left[P\left(4n+2,2,j\right)\right] = P\left(4n+2,2,\sigma\left(j\right)\right)$, where
$\sigma=(1,2)$, it implies that the first element of those simple permutations has to be greater than ${2n+1}$. Now, $\theta^2$ restricted to $P\left(4n+2,2,j\right)$ is simple for all $j=1,2$. Therefore, $\theta(1)=k$, with ${k=2n+2,\ldots,4n+2}$. For instance, there exist $4$ simple permutations with order $2n+1$ in the restriction of $\theta^2$, that is, the genealogy of $\theta$ has four branches. On the other hand, the first element of a simple permutation in this order has to be taken from $\lbrace2n+2, 2n+3, \dots, 4n+2\rbrace$ in order to accomplish the conditions 3.a and 3.b in Definition \ref{def6}. Thus, there are $2n+1$ ways to obtain a simple permutation associated to each one of the 4 possible $\theta^{2}$. It means, $8n+4$ simple permutations with order $4n+2$, which corresponds to the number of members of its genealogy until itself.
\end{proof}

\begin{thm}\label{th10}
Consider the two branches of the genealogy of simple permutations with odd order $\alpha_{2n+1}$ and $\beta_{2n+1}$ given in Definition \ref{def6}. Consider the set $$\mathfrak{P}:=\{\theta_{4n+2}, \phi_{4n+2},\eta_{4n+2},\varphi_{4n+2}\}\subset S_{4n+2}$$ where the permutations are defined as follows:
\begin{equation}\label{eq3}
\theta_{4n+2} : =\alpha_{2n+1}\mid \diamond e_{2n+1},
\end{equation}
\begin{equation}\label{eq4}
\eta_{4n+2} : =e_{2n+1}\mid \diamond\alpha_{2n+1},
\end{equation}
\begin{equation}\label{eq5}
\phi_{4n+2} : =\beta_{2n+1}\mid  \diamond e_{2n+1},
\end{equation}
\begin{equation}\label{eq6}
\varphi_{4n+2} : =e_{2n+1}\mid \diamond\beta_{2n+1}.
\end{equation}
The following statements hold.
\begin{enumerate}
\item $\theta^2_{4n+2}=\eta^2_{4n+2}=\alpha_{2n+1} \diamond\mid \alpha_{2n+1},$  $\phi^2_{4n+2}=\varphi^2_{4n+2}=\beta_{2n+1} \diamond\mid \beta_{2n+1},$
\item $\mathfrak{P}\subset Sim(4n+2)$,  
\item $\theta_{4n-2}\prec\theta_{4n+2}\prec\theta_{4n+6},$ $\phi_{4n-2}\prec\phi_{4n+2}\prec\phi_{4n+6},$
 $\eta_{4n-2}\prec\eta_{4n+2}\prec\eta_{4n+6},$
 $\varphi_{4n-2}\prec\varphi_{4n+2}\prec\varphi_{4n+6}.$
\end{enumerate}
\end{thm}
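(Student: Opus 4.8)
The plan is to verify the three claims in sequence, since each builds on the previous one. First, for claim (1), I would compute $\theta_{4n+2}^2$ and $\eta_{4n+2}^2$ directly from the definitions \eqref{eq3} and \eqref{eq4}. Writing $\theta_{4n+2} = \alpha_{2n+1}\mid\diamond\, e_{2n+1}$, the first block $\{1,\dots,2n+1\}$ maps (by adding $2n+1$) into the second block, and the second block maps identically (as $e_{2n+1}$) back to the first block. Hence applying $\theta_{4n+2}$ twice takes the first block to itself via $\alpha_{2n+1}$ followed by the identity shift, i.e. it realizes $\alpha_{2n+1}$ on $\{1,\dots,2n+1\}$ and likewise $\alpha_{2n+1}$ (shifted) on the second block; reading off the two-line notation this is exactly $\alpha_{2n+1}\diamond\mid\alpha_{2n+1}$. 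The same bookkeeping with $\eta_{4n+2} = e_{2n+1}\mid\diamond\,\alpha_{2n+1}$ gives the identical square, and the computations for $\phi_{4n+2},\varphi_{4n+2}$ are verbatim with $\beta_{2n+1}$ in place of $\alpha_{2n+1}$. This step is pure unwinding of Definition \ref{def10} and is routine.

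For claim (2), that each member of $\mathfrak{P}$ lies in $Sim(4n+2)$, I would check the two conditions of part (3) of Definition \ref{def6} with $s=1$, $k=n$, and $\sigma=(1,2)\in C_2$. Condition (3a) asks that $\theta_{4n+2}$ permute the two natural blocks $P(4n+2,2,1)=\{1,\dots,2n+1\}$ and $P(4n+2,2,2)=\{2n+2,\dots,4n+2\}$ according to $\sigma=(1,2)$; this is immediate from the pasting construction, since the "$\mid\diamond$" operation shifts the $\alpha$-part up by $2n+1$ (sending block $1$ to block $2$) and leaves the $e$-part sending block $2$ to block $1$. Condition (3b) asks that $\theta_{4n+2}^2$ restricted to each block be simple; but claim (1) identifies that restriction as $\alpha_{2n+1}$ (up to the index shift), which is simple of odd order $2n+1$ by part (1) of Definition \ref{def6}. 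One must also confirm $\theta_{4n+2}\in C_{4n+2}$, i.e. that it is a single $(4n+2)$-cycle: this follows because $\alpha_{2n+1}$ is a $(2n+1)$-cycle, so its square on each block is again a single $(2n+1)$-cycle (as $2n+1$ is odd), and the two blocks are interchanged by $\theta_{4n+2}$, forcing a single cycle of length $4n+2$. The same argument applies to $\eta_{4n+2},\phi_{4n+2},\varphi_{4n+2}$.

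For claim (3), the predecessor/successor chains, I would invoke Definition \ref{def12}: to show $\theta_{4n-2}\prec\theta_{4n+2}\prec\theta_{4n+6}$ I need $(4n-2)\lhd(4n+2)\lhd(4n+6)$ and these consecutive in Sharkovskii's order (clear, since the middle tail lists $\dots\lhd 3\cdot2\lhd 5\cdot2\lhd 7\cdot2\lhd\dots$, and $4n-2=(2n-1)\cdot2$, $4n+2=(2n+1)\cdot2$, $4n+6=(2n+3)\cdot2$ are consecutive), together with the forcing relations $\theta_{4n+6}\lhd\theta_{4n+2}\lhd\theta_{4n-2}$. The forcing statements are where the real work lies. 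The natural route is Lemma \ref{lem2}: one shows the Markov graph of $\theta_{4n+2}$ contains a non-repetitive loop of length $4n+6$ realizing the cycle $\theta_{4n+6}$, and that the Markov graph of $\theta_{4n-2}$ contains such a loop realizing $\theta_{4n+2}$. Because $\theta_{4n+2}$ is built by pasting $\alpha_{2n+1}$ with $e_{2n+1}$, its primitive function decomposes into two halves whose squares are the primitive functions of $\alpha_{2n+1}$ and $\alpha_{2n+1}$; thus the loop structure reduces to the known forcing $\alpha_{2n+1}\lhd\alpha_{2n+3}$ among Stefan orbits (Example on Stefan orbits and Theorem \ref{th2}) lifted through the pasting.

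The main obstacle I expect is precisely this last point: transferring the forcing relation between the odd-order Stefan permutations $\alpha_{2n+1}\lhd\alpha_{2n+3}$ (and $\beta_{2n+1}\lhd\beta_{2n+3}$) up to the $4n+2$ level through the pasting operation, i.e. proving that a non-repetitive loop in the Markov graph of $\alpha_{2n+1}$'s primitive function induces a non-repetitive loop of twice-plus-four the appropriate length in the Markov graph of $\theta_{4n+2}$'s primitive function. This requires a careful analysis of how the intervals $J_k$ of the partition split under "$\mid\diamond$" and how arrows in the two half-graphs compose across the block-swap; the bookkeeping is delicate because one must ensure the loop remains non-repetitive after interleaving the two halves. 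Once that lemma is in place, the three claims follow by assembling the pieces, and the remaining verifications are mechanical.
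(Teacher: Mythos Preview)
Your treatment of items (1) and (2) matches the paper's proof almost exactly: both compute the squares by unwinding Definition~\ref{def10} and then verify conditions 3(a) and 3(b) of Definition~\ref{def6} using the computed squares. The observation that $\theta_{4n+2}\in C_{4n+2}$ follows because $\alpha_{2n+1}$ is a $(2n+1)$-cycle is a detail you make slightly more explicit than the paper, but the substance is the same.

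Where you diverge is item (3). You correctly isolate the forcing relations $\theta_{4n+6}\lhd\theta_{4n+2}\lhd\theta_{4n-2}$ between the \emph{specific} permutations as the nontrivial ingredient, and you propose to establish them by lifting the known Stefan forcings $\alpha_{2n+1}\lhd\alpha_{2n+3}$ through the pasting via a Markov-graph argument and Lemma~\ref{lem2}. The paper does not do this. Its proof of (3) is a single short paragraph: it asserts the permutation-level forcing relations ``by Sharkovskii's order,'' then notes that $4n-2<4n+2<4n+6$, that these are consecutive in the Sharkovskii ordering, and that $\mathfrak{P}\subset Sim(4n+2)$ by item (2), and concludes directly from Definition~\ref{def12}. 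No Markov-graph analysis, no explicit loop construction, no lifting lemma.

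So your ``main obstacle'' is real as a mathematical matter---Sharkovskii's theorem and Theorem~\ref{th2} only guarantee the existence of \emph{some} forced cycle of the right period, not that the particular $\theta_{4n-2}$ built by your recipe is the one forced---but the paper sidesteps it rather than resolving it. What you gain from your route is an honest verification of condition (2) in Definition~\ref{def12}; what the paper gains is brevity, at the cost of leaving that verification implicit. If you want to align with the paper, drop the Markov-graph plan for (3) and simply invoke the Sharkovskii ordering together with items (1)--(2); if you want a genuinely complete argument, your proposed lifting lemma is the right target, but be aware it goes beyond what the paper supplies.
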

\begin{proof} We proceed according to each item.
\begin{enumerate}
\item  Due to $\theta_{4n+2}=\alpha_{2n+1}\mid\diamond e_{2n+1}\in S_{4n+2}$ and $\eta_{4n+2}  =e_{2n+1}\mid \diamond\alpha_{2n+1}\in S_{4n+2}$, we have that
$$\theta_{4n+2}=\left(\begin{array}{cccccc}
1  & \dots & 2n+1 & 2n+2 &  \dots & 4n+2\\
\alpha_{2n+1}(1)+2n+1  & \dots &\alpha_{2n+1}(2n+1)+2n+1 & 1 &  \ldots & 2n+1
\end{array}\right)$$ and
$$\eta_{4n+2}=\left(\begin{array}{cccccc}
1  & \dots & 2n+1 & 2n+2 &  \dots & 4n+2\\
 2n+2 &  \ldots & 4n+2&\alpha_{2n+1}(1)  & \dots &\alpha_{2n+1}(2n+1) \end{array}\right).$$ Owing  to $\alpha_{2n+1}= (1,2n+1,n+1,n,n+2,n-1,n+3,\dots,2,2n)$, therefore 
$$\theta_{4n+2}=\left(\begin{array}{ccccccccc}
1  &2& \dots&2n & 2n+1 & 2n+2 &  \dots&4n+1 & 4n+2\\
4n+2 &4n+1 &\dots&2n+2 &3n+2&1&\ldots&2n &2n+1
\end{array}\right)$$ and
$$\eta_{4n+2}=\left(\begin{array}{ccccccccc}
1  &2& \dots & 2n+1 & 2n+2 &2n+3&  \dots&4n+1 & 4n+2\\
2n+2&2n+3&\ldots &4n+2&2n+1 &2n &\dots & 1&n+1
\end{array}\right).$$
 We see that $\theta^2_{2n+1}=\eta^2_{2n+1}$. Moreover,

\begin{eqnarray*}
\theta_{4n+2}^{2} &
= & \left(\begin{array}{cccccccccc}
1 & 2 & \dots &2n& 2n+1 & 2n+2 &2n+3& \dots &4n+1 & 4n+2\\
2n+1 &2n & \dots & 1& n+1&4n+2&4n+1& \dots &2n+2 & 3n+2\end{array}\right)\\
& = & \left(\begin{array}{ccc}
1  & \dots & 2n+1\\
{\alpha_{2n+1}({1})}  & \dots & {\alpha_{2n+1}(2n+1)}\end{array}\right)
\diamond\mid
\left(\begin{array}{ccccccc}
1  & \dots & 2n+1\\
{\alpha_{2n+1}({1})}  & \dots & {\alpha_{2n+1}(2n+1)}\end{array}\right)\\
& = & \alpha_{2n+1}\diamond\mid\alpha_{2n+1}\end{eqnarray*}
Thus, we proved that $\theta^2_{2n+1}=\eta^2_{2n+1}=\alpha_{2n+1}\diamond\mid\alpha_{2n+1}$.
Now, assuming $r=2n+1$ and due to $\phi_{2r}=\beta_{r}\mid\diamond e_{r}\in S_{2r}$, $\varphi_{2r}  =e_{r}\mid \diamond\beta_r\in S_{2r}$, we have that
$$\phi_{2r}=\left(\begin{array}{cccccc}
1  & \dots & r & r+1 &  \dots & 2r\\
\beta_{r}(1)+r  & \dots &\beta_{r}(r)+r & 1 &  \ldots & r\end{array}\right)$$ and
$$\varphi_{2r}=\left(\begin{array}{cccccc}
1  & \dots & r & r+1 &  \dots & 2r\\
 r+1 &  \ldots & 2r&\beta_{r}(1)  & \dots &\beta_{r}(r) \end{array}\right).$$ Owing to $\beta_{r}=(1,\frac{r+1}2,\frac{r+3}2,\frac{r-1}2,\frac{r+5}2,\frac{r-3}2,\dots,2,r)$, we obtain
 $$\phi_{2r}=\left(\begin{array}{cccccc}
1  & \dots & r & r+1 &  \dots & 2r\\
\frac{3r+1}{2} & \dots &r+1 & 1 &  \ldots & r\end{array}\right)$$ and
$$\varphi_{2r}=\left(\begin{array}{cccccc}
1  & \dots & r & r+1 &  \dots & 2r\\
 r+1 &  \ldots & 2r&\frac{r+1}{2}  & \dots &1 \end{array}\right).$$ 
   and proceeding as for the previous case, we have

\begin{eqnarray*}
\phi_{2r}^{2} &
= & \left(\begin{array}{cccccccc}
1 & 2 & \dots & r & r+1 &r+2 & \dots & 2r\\
{\beta_r({1})} & {\beta_r({2})} & \dots & {\beta_r{(r)}} & {\beta_r{(1)}+r} & {\beta_r{(2)}+r} & \dots & {\beta_r{(r)}+r}\end{array}\right)\\
& = & \left(\begin{array}{cccc}
1 & 2 & \dots & n\\
{\beta_r({1})} & {\beta_r({2})} & \dots & {\beta_r(r)}\end{array}\right)
\diamond\mid
\left(\begin{array}{cccccccc}
1 & 2 & \dots & r\\
{\beta_r({1})} & {\beta_r({2})} & \dots & {\beta_{r}(r)}\end{array}\right)\\
& = & \beta_r\diamond\mid\beta_r\end{eqnarray*}
and 
\begin{eqnarray*}
\varphi_{2r}^{2} &
= & \left(\begin{array}{cccccccc}
1 & 2 & \dots & r & r+1 &r+2 & \dots & 2r\\
{\beta_r({1})} & {\beta_r({2})} & \dots & {\beta_r{(r)}} & {\beta_r{(1)}+r} & {\beta_r{(2)}+r} & \dots & {\beta_r{(r)}+r}\end{array}\right)\\
& = & \phi_{2r}^{2}\\
& = & \beta_r\diamond\mid\beta_r\end{eqnarray*}

\item By previous item we have that $\theta_{4n+2}$, $\eta_{4n+2}$, $\phi_{4n+2}$ and $\varphi_{4n+2}$ are $(4n+2)$-cycles, thus, $\mathfrak{P}\subset\in C_{4n+2}$.  Now, we will see that $\theta_{4n+2}$, $\eta_{4n+2}$, $\phi_{4n+2}$ and $\varphi_{4n+2}$ satisfy 3 in Definition \ref{def6}. We start looking condition 3.a, thus we have that
\begin{eqnarray*}
\theta_{4n+2}\left[P\left(4n+2,2,1\right)\right] & = & \theta_{4n+2}\{1, 2, \ldots, 2n,2n+1 \} \\
& = & \{\alpha_{2n+1}(1)+2n+1,\ldots,{\alpha_{2n+1}}(2n+1)+2n+1\}\\
& = & \left\{ 2n+2,2n+3,\ldots, 4n+1,4n+2\right\}\\
& = & P\left(10,2,2\right)\\
& = & P\left(10,2,\sigma\left(1\right)\right)
\end{eqnarray*}
\begin{eqnarray*}
\theta_{4n+2}\left[P\left(4n+2,2,2\right)\right] & = & \theta_{4n+2}\{2n+1, 2n+2, \ldots, 4n+1,4n+2 \} \\
& = & \{e_{2n+1}(1),e_{2n+1}(2),\ldots,{e_{2n+1}}(2n),{e_{2n+1}}(2n+1)\}\\
& = & \left\{ 1,2,\ldots, 2n,2n+1\right\}\\
& = & P\left(10,2,1\right)\\
& = & P\left(10,2,\sigma\left(2\right)\right)
\end{eqnarray*}

Now, we see whether $\theta_{4n+2}$ satisfies the condition 3.b in Definition \ref{def6}. By the previous item we have that,
\begin{eqnarray*}
\theta^2_{4n+2}\left[P\left(4n+2,2,1\right)\right] & = & \alpha_{2n+1}\{1, 2, \ldots, 2n,2n+1 \} \\
& = & \{\alpha_{2n+1}(1),\ldots,{\alpha_{2n+1}}(2n+1)\}\\
& = & \left\{ 1, 2,\ldots, 2n,2n+1\right\}\\
& = & P\left(4n+2,2,1\right)
\end{eqnarray*}
\begin{eqnarray*}
\theta^2_{4n+2}\left[P\left(4n+2,2,2\right)\right] & = & \theta^2_{4n+2}\{2n+2, 2n+3, \ldots, 4n+1,4n+2 \} \\
& = & \{\alpha_{2n+1}(1)+2n+1,\ldots,{\alpha_{2n+1}}(2n+1)+2n+1\}\\
& = & \left\{ 2n+2, 2n+3,\ldots, 4n+1,4n+2\right\}\\
& = & P\left(4n+2,2,2\right)
\end{eqnarray*}
For instance, $\theta_{4n+2}^2$ restricted to $P\left(4n+2,2,j\right)$, being $j=1,2$, is $\alpha_{2n+1}$, which is simple. Thus, we proved that $\theta_{4n+2}\in Sim(4n+2)$.

Similarly for $\eta_{4n+2}$, we start condition 3.a, thus we have that
\begin{eqnarray*}
\eta_{4n+2}\left[P\left(4n+2,2,1\right)\right] & = & \eta_{4n+2}\{1, 2, \ldots, 2n,2n+1 \} \\
& = & \{\alpha_{2n+1}(1)+2n+1,\ldots,{\alpha_{2n+1}}(2n+1)+2n+1\}\\
& = & \left\{ 2n+2,2n+3,\ldots, 4n+1,4n+2\right\}\\
& = & P\left(10,2,2\right)\\
& = & P\left(10,2,\sigma\left(1\right)\right)
\end{eqnarray*}
\begin{eqnarray*}
\eta_{4n+2}\left[P\left(4n+2,2,2\right)\right] & = & \eta_{4n+2}\{2n+1, 2n+2, \ldots, 4n+1,4n+2 \} \\
& = & \{e_{2n+1}(1),e_{2n+1}(2),\ldots,{e_{2n+1}}(2n),{e_{2n+1}}(2n+1)\}\\
& = & \left\{ 1,2,\ldots, 2n,2n+1\right\}\\
& = & P\left(10,2,1\right)\\
& = & P\left(10,2,\sigma\left(2\right)\right)
\end{eqnarray*}

Now, we see that $\eta_{4n+2}$ satisfies the condition 3.b in Definition \ref{def6} because, by the previous item, $\eta_{4n+2}^2=\theta_{4n+2}^2$. Thus, we proved that $\eta_{4n+2}\in Sim(4n+2)$.

The following step is the analysis of $\phi_{4n+2}$. We start looking condition 3.a in Definition \ref{def6}, thus we have that
\begin{eqnarray*}
\phi_{4n+2}\left[P\left(4n+2,2,1\right)\right] & = & \phi_{4n+2}\{1, 2, \ldots, 2n,2n+1 \} \\
& = & \{\beta_{2n+1}(1)+2n+1,\ldots,{\beta_{2n+1}}(2n+1)+2n+1\}\\
& = & \left\{ 2n+2,2n+3,\ldots, 4n+1,4n+2\right\}\\
& = & P\left(10,2,2\right)\\
& = & P\left(10,2,\sigma\left(1\right)\right)
\end{eqnarray*}
\begin{eqnarray*}
\phi_{4n+2}\left[P\left(4n+2,2,2\right)\right] & = & \phi_{4n+2}\{2n+1, 2n+2, \ldots, 4n+1,4n+2 \} \\
& = & \{e_{2n+1}(1),e_{2n+1}(2),\ldots,{e_{2n+1}}(2n),{e_{2n+1}}(2n+1)\}\\
& = & \left\{ 1,2,\ldots, 2n,2n+1\right\}\\
& = & P\left(10,2,1\right)\\
& = & P\left(10,2,\sigma\left(2\right)\right)
\end{eqnarray*}

Now, we see whether $\phi_{4n+2}$ satisfies the condition 3.b in Definition \ref{def6}. By the previous item we have that,
\begin{eqnarray*}
\phi^2_{4n+2}\left[P\left(4n+2,2,1\right)\right] & = & \beta_{2n+1}\{1, 2, \ldots, 2n,2n+1 \} \\
& = & \{\beta_{2n+1}(1),\ldots,{\beta_{2n+1}}(2n+1)\}\\
& = & \left\{ 1, 2,\ldots, 2n,2n+1\right\}\\
& = & P\left(4n+2,2,1\right)
\end{eqnarray*}
\begin{eqnarray*}
\phi^2_{4n+2}\left[P\left(4n+2,2,2\right)\right] & = & \phi^2_{4n+2}\{2n+2, 2n+3, \ldots, 4n+1,4n+2 \} \\
& = & \{\beta_{2n+1}(1)+2n+1,\ldots,{\beta_{2n+1}}(2n+1)+2n+1\}\\
& = & \left\{ 2n+2, 2n+3,\ldots, 4n+1,4n+2\right\}\\
& = & P\left(4n+2,2,2\right)
\end{eqnarray*}
For instance, the restriction of $\phi_{4n+2}^2$ to $P\left(4n+2,2,j\right)$, being $j=1,2$, is $\beta_{2n+1}$, which is simple. Thus, we proved that $\phi_{4n+2}\in Sim(4n+2)$.

Finally, we analyse to $\varphi_{4n+2}$. We start looking condition 3.a, thus we have that
\begin{eqnarray*}
\varphi_{4n+2}\left[P\left(4n+2,2,1\right)\right] & = & \varphi_{4n+2}\{1, 2, \ldots, 2n,2n+1 \} \\
& = & \{\beta_{2n+1}(1)+2n+1,\ldots,{\beta_{2n+1}}(2n+1)+2n+1\}\\
& = & \left\{ 2n+2,2n+3,\ldots, 4n+1,4n+2\right\}\\
& = & P\left(10,2,2\right)\\
& = & P\left(10,2,\sigma\left(1\right)\right)
\end{eqnarray*}
\begin{eqnarray*}
\varphi_{4n+2}\left[P\left(4n+2,2,2\right)\right] & = & \varphi_{4n+2}\{2n+1, 2n+2, \ldots, 4n+1,4n+2 \} \\
& = & \{e_{2n+1}(1),e_{2n+1}(2),\ldots,{e_{2n+1}}(2n),{e_{2n+1}}(2n+1)\}\\
& = & \left\{ 1,2,\ldots, 2n,2n+1\right\}\\
& = & P\left(10,2,1\right)\\
& = & P\left(10,2,\sigma\left(2\right)\right)
\end{eqnarray*}

Now, we see that $\varphi_{4n+2}$ satisfies the condition 3.b in Definition \ref{def6} because, by the previous item, $\varphi_{4n+2}^2=\phi_{4n+2}^2$. Thus, we proved that $\varphi_{4n+2}\in Sim(4n+2)$. In conclusion, $\mathfrak{P}\subset Sim(4n+2)$.
\item By Sharkovskii's order we have that \begin{eqnarray*}
{4n-2}\lhd\theta_{4n+2}\lhd\theta_{4n+6},\\
\phi_{4n-2}\lhd\phi_{4n+2}\lhd\phi_{4n+6},\\
\eta_{4n-2}\lhd\eta_{4n+2}\lhd\eta_{4n+6},\\
\varphi_{4n-2}\lhd\varphi_{4n+2}\lhd\varphi_{4n+6}.
\end{eqnarray*} On the other hand, due to $\mathfrak{P}\subset Sim(4n+2)$, $4n-2< 4n+2<4n+6$  and there is not intermediate elements in $4n-2\lhd 4n+2\lhd 4n+6$, we obtain  $\theta_{4n-2}\prec\theta_{4n+2}\prec\theta_{4n+6},$ $\phi_{4n-2}\prec\phi_{4n+2}\prec\phi_{4n+6},$
 $\eta_{4n-2}\prec\eta_{4n+2}\prec\eta_{4n+6},$
 $\varphi_{4n-2}\prec\varphi_{4n+2}\prec\varphi_{4n+6}.$

\end{enumerate}
\end{proof}

To illustrate the previous theorems, we introduce the following examples.
\begin{ex}
The permutations $$\theta_6=\alpha_{3}\mid\diamond e_{3}=\left(\begin{array}{cccccc}
1 & 2 & 3 & 4 & 5 & 6\\
6 & 4 & 5 & 1 & 2 & 3\end{array}\right),$$ $$\phi_6=\beta_{3}\mid\diamond e_{3}=\left(\begin{array}{cccccc}
1 & 2 & 3 & 4 & 5 & 6\\
5 & 6 & 4 & 1 & 2 & 3\end{array}\right),$$ $$\eta=e_{3}\mid\diamond \alpha_{3}=\left(\begin{array}{cccccc}
1 & 2 & 3 & 4 & 5 & 6\\
4 & 5 & 6 & 3 & 1 & 2\end{array}\right)$$ and $$\varphi=e_{3}\mid\diamond \beta_{3}=\left(\begin{array}{cccccc}
1 & 2 & 3 & 4 & 5 & 6\\
4 & 5 & 6 & 2 & 3 & 1\end{array}\right)$$ are simple permutations of order 6.
\end{ex}
\begin{ex}
The branches $\theta_{4n+2}$, $\phi_{4n+2}$, $\eta_{4n+2}$ and $\varphi_{4n+2}$ of the genealogy associated to $Sim(4n+2)$ are respectively:
$$(1,6,3,5,2,4)\prec (1,10,5,8,3,7,2,9,4,6)\prec\cdots$$
$$(1,5,2,6,3,4)\prec (1,8,3,9,4,7,2,10,5,6)\prec \cdots$$
 $$(1,4,3,6,2,5)\prec(1,6,5,10,3,8,2,7,4,9)\prec \cdots$$
 $$(1,4,2,5,3,6)\prec(1,6,3,8,4,9,2,7,5,10)\prec\cdots$$
\end{ex}

\subsection{Permutations belonging to $Sim(6)$ and $Sim(10)$}
Here we use Pasting operation and simple permutations of odd order to construct
 simple permutations with order $6$ and $10$. Definition \ref{def6} allows to suggest a way to construct simple permutations $\theta$ taking as reference $\theta^{2}$ and its relation with $\alpha$ and/or $\beta$.  We illustrate this in the following example.
\begin{ex}
Consider $\alpha_3$, $\beta_3$ and
 $\sigma=(1,2)$. Let $\theta$ be the permutation $$\theta=\left(\begin{array}{cccccc}
1 & 2 & 3 & 4 & 5 & 6\\
6 & 5 & 4 & 1 & 3 & 2\end{array}\right).$$
We see that
 \begin{eqnarray*}
\theta\left[P\left(6,2,1\right)\right] & = & \theta\left[\left\{ 1,2,3\right\} \right]=\left\{ 6,5,4\right\} =\left\{ 4,5,6\right\} =P\left(6,2,\sigma\left(1\right)\right)=P\left(6,2,2\right)\end{eqnarray*}
\begin{eqnarray*}
\theta\left[P\left(6,2,2\right)\right] & = & \theta\left[\left\{ 4,5,6\right\} \right]=\left\{ 1,3,2\right\} =\left\{ 1,2,3\right\} =P\left(6,2,\sigma\left(2\right)\right)=P\left(6,2,1\right)\end{eqnarray*}
Now, owing to 
$$\theta^{2}=\left(\begin{array}{cccccc}
1 & 2 & 3 & 4 & 5 & 6\\
2 & 3 & 1 & 6 & 4 & 5\end{array}\right)=\beta_3\diamond\mid \alpha_3,$$ we see that $\theta^2$ restricted to $P\left(6,2,1\right)$ and $P\left(6,2,2\right)$ are simple permutations ($\alpha_3$ or $\beta_3$). For instance, $\theta\in Sim(6)$. We can recover $\theta$ throughout $\theta^2$ using right pasting.
\end{ex}

The previous example gives a motivation to find the complete set of simple permutations with mixed order $6$ and $10$, using right Pasting. From now on we denote by $\sigma$ the transposition $(1,2)$, that is, $\sigma=(1,2)$. According to Theorem \ref{th9}, there are $12$ simple permutations of order $6$ in $4$ branches, each one of these branches correspond
 to $\theta^{2}$ by right Pasting of $\alpha_{3}$ and $\beta_{3}$ as follows:

\begin{enumerate}
\item Consider $\alpha=\alpha_3$ and $\theta_{\alpha\alpha}$ such that $\theta_{\alpha\alpha}^{2}=\alpha\diamond\mid\alpha$, i.e.,
$$\theta_{\alpha\alpha}^{2}=\left(\begin{array}{cccccc}
1 & 2 & 3 & 4 & 5 & 6\\
3 & 1 & 2 & 6 & 4 & 5\end{array}\right),$$ which give us the following options for $\theta_{\alpha\alpha}$.
\begin{eqnarray*}
\theta_{\alpha\alpha6} & = & \left(\begin{array}{cccccc}
1 & 2 & 3 & 4 & 5 & 6\\
6 & 4 & 5 & 1 & 2 & 3\end{array}\right)\\
\theta_{\alpha\alpha5} & = & \left(\begin{array}{cccccc}
1 & 2 & 3 & 4 & 5 & 6\\
5 & 6 & 4 & 2 & 3 & 1\end{array}\right)\\
\theta_{\alpha\alpha4} & = & \left(\begin{array}{cccccc}
1 & 2 & 3 & 4 & 5 & 6\\
4 & 5 & 6 & 3 & 1 & 2\end{array}\right)\end{eqnarray*}
For ${i=4,5,6}$ we have
\begin{eqnarray*}
\theta_{\alpha\alpha i}\left[P\left(6,2,1\right)\right] & = & \theta_{\alpha\alpha i}\left[\left\{ 1,2,3\right\} \right]\\
& = & \left\{ 4,5,6\right\}\\
& = & P\left(6,2,\sigma\left(1\right)\right)\\
& = & P\left(6,2,2\right)\end{eqnarray*}
\begin{eqnarray*}
\theta_{\alpha\alpha i}\left[P\left(6,2,2\right)\right] & = & \theta_{\alpha\alpha i}\left[\left\{ 4,5,6\right\} \right]\\
& = & \left\{ 1,2,3\right\}\\
& = & P\left(6,2,\sigma\left(2\right)\right)\\
& = & P\left(6,2,1\right)\end{eqnarray*}
Furthermore, $\theta^2_{\alpha\alpha i}$ restricted to $P\left(6,2,1\right)$ and $P\left(6,2,2\right)$ is $\alpha_3$, which is simple. For instance $\{\theta_{\alpha\alpha i}: 4\leq i\leq 6\}\subset Sim(6)$.

\item Consider $\beta=\beta_3$ and $\theta_{\beta\beta}$ such that $\theta_{\beta\beta}^{2}=\beta\diamond\mid\beta$, i.e.,
$$\theta_{\beta\beta}^{2}=\left(\begin{array}{cccccc}
1 & 2 & 3 & 4 & 5 & 6\\
2 & 3 & 1 & 5 & 6 & 4\end{array}\right),$$
which give us the following options for $\theta_{\beta\beta}$.
\begin{eqnarray*}
\theta_{\beta\beta 4} & = & \left(\begin{array}{cccccc}
1 & 2 & 3 & 4 & 5 & 6\\
4 & 5 & 6 & 2 & 3 & 1\end{array}\right)\\
\theta_{\beta\beta 5} & = & \left(\begin{array}{cccccc}
1 & 2 & 3 & 4 & 5 & 6\\
5 & 6 & 4 & 1 & 2 & 3\end{array}\right)\\
\theta_{\beta\beta 6} & = & \left(\begin{array}{cccccc}
1 & 2 & 3 & 4 & 5 & 6\\
6 & 4 & 5 & 3 & 1 & 2\end{array}\right)
\end{eqnarray*}
For ${i=4,5,6}$ we have
\begin{eqnarray*}
\theta_{\beta\beta i}\left[P\left(6,2,1\right)\right] & = & \theta_{\beta\beta i}\left[\left\{ 1,2,3\right\} \right]\\
& = & \left\{ 4,5,6\right\}\\
& = & P\left(6,2,\sigma\left(1\right)\right)\\
& = & P\left(6,2,2\right)\end{eqnarray*}

\begin{eqnarray*}
\theta_{\beta\beta i}\left[P\left(6,2,2\right)\right] & = & \theta_{\beta\beta i}\left[\left\{ 4,5,6\right\} \right]\\
& = & \left\{ 1,2,3\right\}\\
& = & P\left(6,2,\sigma\left(2\right)\right)\\
& = & P\left(6,2,1\right)\end{eqnarray*}
Furthermore, $\theta^2_{\beta\beta i}$ restricted to $P\left(6,2,1\right)$ and $P\left(6,2,2\right)$ is $\beta_3$, which is simple. For instance $\{\theta_{\beta\beta i}: 4\leq i\leq 6\}\subset Sim(6)$.

\item Consider $\alpha=\alpha_3$, $\beta=\beta_3$ and $\theta_{\alpha\beta}$ such that $\theta^{2}_{\alpha\beta}=\alpha\diamond\mid\beta$, i.e.,
$$\theta_{\alpha\beta}^{2}=\left(\begin{array}{cccccc}
1 & 2 & 3 & 4 & 5 & 6\\
3 & 1 & 2 & 5 & 6 & 4\end{array}\right).$$
Thus, we have the following options for $\theta_{\alpha\beta}$.
\begin{eqnarray*}
\theta_{\alpha\beta 4} & = & \left(\begin{array}{cccccc}
1 & 2 & 3 & 4 & 5 & 6\\
4 & 6 & 5 & 3 & 1 & 2\end{array}\right)\\
\theta_{\alpha\beta 5} & = & \left(\begin{array}{cccccc}
1 & 2 & 3 & 4 & 5 & 6\\
5 & 4 & 6 & 1 & 3 & 2\end{array}\right)\\
\theta_{\alpha\beta 6} & = & \left(\begin{array}{cccccc}
1 & 2 & 3 & 4 & 5 & 6\\
6 & 5 & 4 & 2 & 1 & 3\end{array}\right)\end{eqnarray*}
For ${i=4,5,6}$

\begin{eqnarray*}
\theta_{\alpha\beta i}\left[P\left(6,2,1\right)\right] & = & \theta_{\alpha\beta i}\left[\left\{ 1,2,3\right\} \right]\\
& = & \left\{ 4,5,6\right\}\\
& = & P\left(6,2,\sigma\left(1\right)\right)\\
& = & P\left(6,2,2\right)\end{eqnarray*}

\begin{eqnarray*}
\theta_{\alpha\beta i}\left[P\left(6,2,2\right)\right] & = & \theta_{\alpha\beta i}\left[\left\{ 4,5,6\right\} \right]\\
& = & \left\{ 1,2,3\right\}\\
& = & P\left(6,2,\sigma\left(2\right)\right)\\
& = & P\left(6,2,1\right)\end{eqnarray*}
Furthermore, $\theta^2_{\alpha\beta i}$ restricted to $P\left(6,2,1\right)$ and $P\left(6,2,2\right)$ is either $\alpha_3$ or $\beta_3$, which are simple. For instance $\{\theta_{\alpha\beta i}: 4\leq i\leq 6\}\subset Sim(6)$.
\item  Consider $\alpha=\alpha_3$, $\beta=\beta_3$ and $\theta_{\beta\alpha}$ such that $\theta^{2}_{\beta\alpha}=\beta\diamond\mid\alpha$, i.e.,
$$\theta_{\beta\alpha}^{2}=\left(\begin{array}{cccccc}
1 & 2 & 3 & 4 & 5 & 6\\
2 & 3 & 1 & 6 & 4 & 5\end{array}\right).$$ Thus, we have the following options for $\theta_{\beta\alpha}$

\begin{eqnarray*}
\theta_{\beta\alpha 4} & = & \left(\begin{array}{cccccc}
1 & 2 & 3 & 4 & 5 & 6\\
4 & 6 & 5 & 2 & 1 & 3\end{array}\right)\\
\theta_{\beta\alpha 5} & = & \left(\begin{array}{cccccc}
1 & 2 & 3 & 4 & 5 & 6\\
5 & 4 & 6 & 3 & 2 & 1\end{array}\right)\\
\theta_{\beta\alpha 6} & = & \left(\begin{array}{cccccc}
1 & 2 & 3 & 4 & 5 & 6\\
6 & 5 & 4 & 1 & 3 & 2\end{array}\right)\end{eqnarray*}
 For ${i=4,5,6}$ we have
\begin{eqnarray*}
\theta_{\beta\alpha i}\left[P\left(6,2,1\right)\right] & = & \theta_{\beta\alpha i}\left[\left\{ 1,2,3\right\} \right]\\
& = & \left\{ 4,5,6\right\}\\
& = & P\left(6,2,\sigma\left(1\right)\right)\\
& = & P\left(6,2,2\right)\end{eqnarray*}

\begin{eqnarray*}
\theta_{\beta\alpha i}\left[P\left(6,2,2\right)\right] & = & \theta_{\beta\alpha i}\left[\left\{ 4,5,6\right\} \right]\\
& = & \left\{ 1,2,3\right\}\\
& = & P\left(6,2,\sigma\left(2\right)\right)\\
& = & P\left(6,2,1\right)\end{eqnarray*}
Furthermore, $\theta^2_{\beta\alpha i}$ restricted to $P\left(6,2,1\right)$ and $P\left(6,2,2\right)$ is either $\alpha_3$ or $\beta_3$, which are simple. For instance $\{\theta_{\beta\alpha i}: 4\leq i\leq 6\}\subset Sim(6)$.
\end{enumerate}

For all of these simple permutations $\theta$, $\theta$(1) will be 4, 5 or 6. In general, if $\theta\in Sim(4n+2)$, then $\theta$(1) takes values between ${2n+2}$ and ${4n+2}$.

Following the previous procedure, it is possible to construct 20 simple permutations of order 10,  according to each one of the four branches of the genealogy associated to $Sim(10)$. Each branch corresponds to $\theta^{2}$ by right Pasting of $\alpha_{5}$ and $\beta_{5}$ as follows:

\begin{enumerate}

\item Consider $\alpha=\alpha_5$ and $\theta_{\alpha\alpha}$ such that $\theta_{\alpha\alpha}^{2} = \alpha\diamond\mid\alpha$. Thus, we have
$$\theta_{\alpha\alpha}^{2}=\left(\begin{array}{cccccccccc}
1 & 2 & 3 & 4 & 5 & 6 & 7 & 8 & 9 & 10\\
5 & 4 & 2 & 1 & 3 & 10 & 9 & 7 & 6 & 8\end{array}\right)$$ and the possible options for $\theta_{\alpha\alpha}$ are
\begin{eqnarray*}
\theta_{\alpha\alpha 6} & = & \left(\begin{array}{cccccccccc}
1 & 2 & 3 & 4 & 5 & 6 & 7 & 8 & 9 & 10\\
6 & 7 & 8 & 9 & 10 & 5 & 4 & 2 & 1 & 3\end{array}\right)\\
\theta_{\alpha\alpha 7} & = & \left(\begin{array}{cccccccccc}
1 & 2 & 3 & 4 & 5 & 6 & 7 & 8 & 9 & 10\\
7 & 10 & 6 & 8 & 9 & 2 & 5 & 1 & 3 & 4\end{array}\right)\\
\theta_{\alpha\alpha 8} & = & \left(\begin{array}{cccccccccc}
1 & 2 & 3 & 4 & 5 & 6 & 7 & 8 & 9 & 10\\
8 & 6 & 9 & 10 & 7 & 4 & 3 & 5 & 2 & 1\end{array}\right)\\
\theta_{\alpha\alpha 9} & = & \left(\begin{array}{cccccccccc}
1 & 2 & 3 & 4 & 5 & 6 & 7 & 8 & 9 & 10\\
9 & 8 & 10 & 7 & 6 & 3 & 1 & 4 & 5 & 2\end{array}\right)\\
\theta_{\alpha\alpha 10} & = & \left(\begin{array}{cccccccccc}
1 & 2 & 3 & 4 & 5 & 6 & 7 & 8 & 9 & 10\\
10 & 9 & 7 & 6 & 8 & 1 & 2 & 3 & 4 & 5\end{array}\right)\end{eqnarray*}
For ${i=6,7,8,9,10}$ we have
\begin{eqnarray*}
\theta_{\alpha\alpha i}\left[P\left(10,2,1\right)\right] & = & \theta_{\alpha\alpha i}\left[\left\{ 1,2,3,4,5\right\} \right]\\
& = & \left\{ 6,7,8,9,10\right\}\\
& = & P\left(10,2,\sigma\left(1\right)\right)\\
& = & P\left(10,2,2\right)\end{eqnarray*}
\begin{eqnarray*}
\theta_{\alpha\alpha i}\left[P\left(10,2,2\right)\right] & = & \theta_{\alpha\alpha i}\left[\left\{ 6,7,8,9,10\right\} \right]\\
& = & \left\{ 1,2,3,4,5\right\}\\
& = & P\left(10,2\sigma\left(2\right)\right)\\
& = & P\left(10,2,1\right)\end{eqnarray*}
Furthermore, $\theta^2_{\alpha\alpha i}$ restricted to $P\left(10,2,1\right)$ and $P\left(10,2,2\right)$ is  $\alpha_5$, which is simple. For instance $\{\theta_{\alpha\alpha i}: 6\leq i\leq 10\}\subset Sim(10)$.
\item Consider $\beta=\beta_5$ and $\theta_{\beta\beta}^{2}=\beta\diamond\mid\beta$ , that is, 
$$\theta_{\beta\beta}^{2}=\left(\begin{array}{cccccccccc}
1 & 2 & 3 & 4 & 5 & 6 & 7 & 8 & 9 & 10\\
3 & 5 & 4 & 2 & 1 & 8 & 10 & 9 & 7 & 6\end{array}\right).$$
The possible options for $\theta_{\beta\beta}$ are
\begin{eqnarray*}
\theta_{\beta\beta 6} & = & \left(\begin{array}{cccccccccc}
1 & 2 & 3 & 4 & 5 & 6 & 7 & 8 & 9 & 10\\
6 & 7 & 8 & 9 & 10 & 3 & 5 & 4 & 2 & 1\end{array}\right)\\
\theta_{\beta\beta 7} & = & \left(\begin{array}{cccccccccc}
1 & 2 & 3 & 4 & 5 & 6 & 7 & 8 & 9 & 10\\
7 & 8 & 10 & 6 & 9 & 2 & 3 & 5 & 1 & 4\end{array}\right)\\
\theta_{\beta\beta 8} & = & \left(\begin{array}{cccccccccc}
1 & 2 & 3 & 4 & 5 & 6 & 7 & 8 & 9 & 10\\
8 & 10 & 9 & 7 & 6 & 1 & 2 & 3 & 4 & 5\end{array}\right)\\
\theta_{\beta\beta 9} & = & \left(\begin{array}{cccccccccc}
1 & 2 & 3 & 4 & 5 & 6 & 7 & 8 & 9 & 10\\
9 & 6 & 7 & 10 & 8 & 5 & 4 & 1 & 3 & 2\end{array}\right)\\
\theta_{\beta\beta 10} & = & \left(\begin{array}{cccccccccc}
1 & 2 & 3 & 4 & 5 & 6 & 7 & 8 & 9 & 10\\
10 & 9 & 6 & 8 & 7 & 4 & 1 & 2 & 5 & 3\end{array}\right)\end{eqnarray*}
 For ${i=4,5,6}$ we have

\begin{eqnarray*}
\theta_{\beta\beta i}\left[P\left(10,2,1\right)\right] & = & \theta_{\beta\beta i}\left[\left\{ 1,2,3,4,5\right\} \right]\\
& = & \left\{ 6,7,8,9,10\right\}\\
& = & P\left(10,2,\sigma\left(1\right)\right)\\
& = & P\left(10,2,2\right)\end{eqnarray*}
\begin{eqnarray*}
\theta_{\beta\beta i}\left[P\left(10,2,2\right)\right] & = & \theta_{\beta\beta}\left[\left\{ 6,7,8,9,10\right\} \right]\\
& = & \left\{ 1,2,3,4,5\right\}\\
& = & P\left(10,2,\sigma\left(2\right)\right)\\
& = & P\left(10,2,1\right)\end{eqnarray*}
Furthermore, $\theta^2_{\beta\beta i}$ restricted to $P\left(10,2,1\right)$ and $P\left(10,2,2\right)$ is $\beta_3$, which is simple. For instance $\{\theta_{\beta\beta i}: 6\leq i\leq 10\}\subset Sim(10)$.

\item  Consider $\alpha=\alpha_5$, $\beta=\beta_5$ and $\theta_{\alpha\beta}^{2}=\alpha\diamond\mid\beta$. Thus, we have
$$\theta_{\alpha\beta}^{2}=\left(\begin{array}{cccccccccc}
1 & 2 & 3 & 4 & 5 & 6 & 7 & 8 & 9 & 10\\
5 & 4 & 2 & 1 & 3 & 8 & 10 & 9 & 7 & 6\end{array}\right).$$ The possible options for $\theta_{\alpha\beta}$ are 
\begin{eqnarray*}
\theta_{\alpha\beta 6} & = & \left(\begin{array}{cccccccccc}
1 & 2 & 3 & 4 & 5 & 6 & 7 & 8 & 9 & 10\\
6 & 7 & 9 & 10 & 8 & 5 & 4 & 3 & 2 & 1\end{array}\right)\\
\theta_{\alpha\beta 7} & = & \left(\begin{array}{cccccccccc}
1 & 2 & 3 & 4 & 5 & 6 & 7 & 8 & 9 & 10\\
7 & 8 & 6 & 9 & 10 & 2 & 5 & 4 & 1 & 3\end{array}\right)\\
\theta_{\alpha\beta 8} & = & \left(\begin{array}{cccccccccc}
1 & 2 & 3 & 4 & 5 & 6 & 7 & 8 & 9 & 10\\
8 & 10 & 7 & 6 & 9 & 1 & 2 & 5 & 3 & 4\end{array}\right)\\
\theta_{\alpha\beta 9} & = & \left(\begin{array}{cccccccccc}
1 & 2 & 3 & 4 & 5 & 6 & 7 & 8 & 9 & 10\\
9 & 6 & 10 & 8 & 7 & 4 & 3 & 1 & 5 & 2\end{array}\right)\\
\theta_{\alpha\beta 10} & = & \left(\begin{array}{cccccccccc}
1 & 2 & 3 & 4 & 5 & 6 & 7 & 8 & 9 & 10\\
10 & 9 & 8 & 7 & 6 & 3 & 1 & 2 & 4 & 5\end{array}\right)\end{eqnarray*}
For ${i=6,7,8,9,10}$ we have

\begin{eqnarray*}
\theta_{\alpha\beta i}\left[P\left(10,2,1\right)\right] & = & \theta_{\alpha\beta i}\left[\left\{ 1,2,3,4,5\right\} \right]\\
& = & \left\{ 6,7,8,9,10\right\}\\
& = & P\left(10,2,\sigma\left(1\right)\right)\\
& = & P\left(10,2,2\right)\end{eqnarray*}
\begin{eqnarray*}
\theta_{\alpha\beta i}\left[P\left(10,2,2\right)\right] & = & \theta_{\alpha\beta}\left[\left\{ 6,7,8,9,10\right\} \right]\\
& = & \left\{1,2,3,4,5\right\}\\
& = & P\left(10,2,\sigma\left(2\right)\right)\\
& = & P\left(10,2,1\right)\end{eqnarray*}
Furthermore, $\theta^2_{\alpha\beta i}$ restricted to $P\left(10,2,1\right)$ and $P\left(10,2,2\right)$ is either $\alpha_5$ or $\beta_5$, which are simple. For instance $\{\theta_{\alpha\beta i}: 6\leq i\leq 10\}\subset Sim(10)$.

\item Consider $\alpha=\alpha_5$, $\beta=\beta_5$ and $\theta_{\beta\alpha}^{2} = \beta\diamond\mid\alpha$. Thus,
$$\theta_{\beta\alpha}^{2}=\left(\begin{array}{cccccccccc}
1 & 2 & 3 & 4 & 5 & 6 & 7 & 8 & 9 & 10\\
3 & 5 & 4 & 2 & 1 & 10 & 9 & 7 & 6 & 8\end{array}\right)$$ and the possible options for $\theta_{\beta\alpha}$
are
\begin{eqnarray*}
\theta_{\beta\alpha 6} & = & \left(\begin{array}{cccccccccc}
1 & 2 & 3 & 4 & 5 & 6 & 7 & 8 & 9 & 10\\
6 & 7 & 10 & 8 & 9 & 3 & 5 & 2 & 1 & 4\end{array}\right)\\
\theta_{\beta\alpha 7} & = & \left(\begin{array}{cccccccccc}
1 & 2 & 3 & 4 & 5 & 6 & 7 & 8 & 9 & 10\\
7 & 10 & 9 & 6 & 8 & 2 & 3 & 1 & 4 & 5\end{array}\right)\\
\theta_{\beta\alpha 8} & = & \left(\begin{array}{cccccccccc}
1 & 2 & 3 & 4 & 5 & 6 & 7 & 8 & 9 & 10\\
8 & 6 & 7 & 9 & 10 & 5 & 4 & 3 & 2 & 1\end{array}\right)\\
\theta_{\beta\alpha 9} & = & \left(\begin{array}{cccccccccc}
1 & 2 & 3 & 4 & 5 & 6 & 7 & 8 & 9 & 10\\
9 & 8 & 6 & 10 & 7 & 4 & 1 & 5 & 3 & 2\end{array}\right)\\
\theta_{\beta\alpha 10} & = & \left(\begin{array}{cccccccccc}
1 & 2 & 3 & 4 & 5 & 6 & 7 & 8 & 9 & 10\\
10 & 9 & 8 & 7 & 6 & 1 & 2 & 4 & 5 & 3\end{array}\right)\end{eqnarray*}
For ${i=6,7,8,9,10}$
\begin{eqnarray*}
\theta_{\beta\alpha i}\left[P\left(10,2,1\right)\right] & = & \theta_{\beta\alpha i}\left[\left\{ 1,2,3,4,5\right\} \right]\\
& = & \left\{ 6,7,8,9,10\right\}\\
& = & P\left(10,2,\sigma\left(1\right)\right)\\
& = & P\left(10,2,2\right)\end{eqnarray*}
\begin{eqnarray*}
\theta_{\beta\alpha i}\left[P\left(10,2,2\right)\right] & = & \theta_{\beta\alpha i}\left[\left\{ 6,7,8,9,10\right\} \right]\\
& = & \left\{ 1,2,3,4,5\right\}\\
& = & P\left(10,2,\sigma\left(2\right)\right)\\
& = & P\left(10,2,1\right)\end{eqnarray*}
Furthermore, $\theta^2_{\beta\alpha i}$ restricted to $P\left(10,2,1\right)$ and $P\left(10,2,2\right)$ is either $\alpha_5$ or $\beta_5$, which are simple. For instance $\{\theta_{\beta\alpha i}: 6\leq i\leq 10\}\subset Sim(10)$.
\end{enumerate}

\section*{Final Remarks and Open Questions}

The paper \cite{abdulla} evidenced the importance of simple permutations with order $4n+2$ as a particular case of $(2n+1)2^s$ strong orbits in agreement with \cite{Minimal}. However, this paper followed the notion of simple orbits in agreement with \cite{Bernhardt}. Keeping in mind that Pasting and Reversing operations have been used to describe the genealogy of simple permutations with order a power of two in a recursive way and the first particular case of mixed order $4n+2$ in a constructive way, the use of those operations as a way to describe periodic orbits can lead to a new perspective. The following problems are related to this paper and can be source of future papers.

The first problem to be developed from this paper, is the extension of the found theorems in order $4n+2$ to the following order $8n+4$ and subsequently to the complete ``middle tail" of Sharkovskii's order. We recall that in order $8n+4$ there are two permutations $\sigma$ with order a power of two, and it would increase the ways to construct simple permutations with mixed order.

The second problem to be developed is to find the relation between Pasting and Reversing of simple permutations with order $4n+2$ and the dynamic of the associated primitive functions. Markov graphs shows some facts about this dynamic (for example, the existence of non-connected vertices and its relation with the existence of some periods, critical points, etc), but what can we say about Pasting and Reversing? How can be affected the dynamic whether we use pasting and reversing of simple permutations? What happens with Markov graphs and primitive functions whether we use pasting and reversing of simple permutations? Can we rewrite some results through of pasting of q-cycles?

Finally, this approach of Pasting and Reversing must be used to study the notion of primary orbits introduced by Alseda, Llibre and Misiurewicz, which plays the
same (and better) role as minimal orbits. In this context, this paper differs to the paper of Abdulla et. al. It must be interesting to apply this approach of Pasting and Reversing to recover the results concerning to strong simple orbits and digraphs.


\subsection*{Acknowledgments} The first author is partially supported by the MICIIN/FEDER grant number MTM2012--31714, by the Generalitat de Catalunya grant number 2009SGR859 and by ECOS Nord France - Colombie C12M01. Finally, the authors thank to the anonymous referees by their valuable suggestions and comments.

\end{document}